\documentclass[11pt]{article}
\usepackage[utf8]{inputenc}
\usepackage[T1]{fontenc}

\usepackage{authblk}
\usepackage[colorlinks=true, linkcolor=blue!80!black, citecolor=blue!80!black, urlcolor=blue!80!black]{hyperref}
\usepackage{url}
\usepackage{booktabs}
\usepackage{graphicx}
\usepackage{amssymb,amsmath,amsthm,mathtools,mathrsfs,braket,bbm,bm}
\usepackage{enumitem}
\usepackage{subcaption}
\usepackage{csquotes}
\usepackage{soul}
\usepackage[table]{xcolor}
\usepackage{makecell}
\usepackage{multirow}
\usepackage{pifont}
\usepackage{dblfloatfix}
\usepackage[nameinlink]{cleveref}
\usepackage{moresize}
\makeatletter
\AddToHook{cmd/appendix/before}{\def\cref@section@alias{appendix}\def\cref@subsection@alias{appendix}}
\makeatother

\usepackage[sort,numbers]{natbib}
\usepackage[hyperpageref]{backref}

\usepackage{pgfplots}
\pgfplotsset{compat=1.18}
\usetikzlibrary{arrows.meta, positioning, fit}

\newcommand{\R}{\mathbb{R}}
\newcommand{\N}{\mathbb{N}}
\newcommand{\F}{\mathscr{F}}
\newcommand{\eu}{\mathrm{e}}
\newcommand{\du}{\mathrm{d}}
\DeclareMathOperator*{\argmax}{arg\!\max}
\DeclareMathOperator*{\argmin}{arg\!\min}
\newcommand{\abs}[1]{\left\lvert#1\right\rvert}
\newcommand{\norm}[1]{\lVert#1\rVert}
\newcommand{\schedule}{\bm{\eta}}

\newtheorem{theorem}{Theorem}[section]
\newtheorem{lemma}{Lemma}[section]
\newtheorem*{remark}{Remark}
\newtheorem{definition}{Definition}[section]

\newcommand{\cmark}{\ding{51}}%
\newcommand{\xmark}{\ding{55}}%

\def\arxiv{}

\title{Lower Bounds for Anytime Acceleration of Gradient Descent}
\author[1]{Chung-En Tsai}
\author[1,2]{Ilyas Fatkhullin}
\author[1]{Liang Zhang}
\author[1]{Niao He}
\affil[1]{Department of Computer Science, ETH Zurich}
\affil[2]{ETH AI Center}
\date{\today}

\begin{document}

\maketitle

\begin{abstract}
Recent work suggests that the convergence rate of gradient descent (GD) in smooth convex optimization can be significantly improved by employing large stepsizes that may violate the descent property.
In particular, if the total number of iterations $n$ is given, an $O(n^{-1.271})$ convergence rate can be achieved for both \textit{function value} and squared \textit{gradient norm} minimization.
On the other hand, in the setting of anytime convergence, where $n$ is not known in advance, the best known rates of GD are much slower: $O(n^{-1.119})$ for function value minimization and $O(n^{-1})$ for squared gradient norm minimization.
It remains open whether any of these upper bounds can be improved, as they are far from the classical $\Omega(n^{-2})$ lower bound for any first-order method.
\ifdefined\arxiv
    \par
\fi
In this work, we establish two lower bounds on the anytime convergence of GD.
We show that no positive stepsize schedule can achieve an $o(n^{-1.334})$ anytime rate for \textit{function value} minimization, nor an $o(n^{-1})$ anytime rate for squared \textit{gradient norm} minimization.
The key ingredients of our analysis are novel upper bounds on the number and the magnitude of large stepsizes, derived by analyzing GD on quadratic functions and variants of Huber functions.
Our work provides the first lower bounds for the COLT 2024 open problem posed by Kornowski and Shamir regarding the optimal anytime convergence rates of GD.
\end{abstract}

\newpage

\section{Introduction}

We consider the problem of minimizing a smooth convex function via gradient descent (GD), a canonical setting in convex optimization \cite{nesterov:2018}.
Given a stepsize schedule $\schedule=(\eta_k)_{k\in\N}$ and an initial point $x_1\in\R^d$, GD generates its iterates according to the following update rule:
\[
	x_{k+1} = x_k - \eta_k \nabla f(x_k), \quad\forall k\in\N,
\]
where $f$ is an $L$-smooth convex function.
While most existing works focus on small stepsizes $\eta_k\in(0,2/L)$, a recent line of work has begun exploring the benefits of using large stepsizes that exceed $2/L$ \cite{altschuler:2024b,altschuler:2026,grimmer:2023,wu:2024}.
Motivated by this, we study the optimal convergence rates of GD under any positive, non-adaptive,\footnotemark{} yet not necessarily bounded stepsize schedule.
\footnotetext{That is, the stepsize schedule $\schedule$ is chosen before running GD and cannot depend on past iterates or gradients \cite{altschuler:2026}.}

There are multiple ways of defining the worst-case convergence rate of an iterative algorithm, and we now introduce the two definitions considered in this work.
Let $\mathcal{A}$ be an iterative algorithm, such as GD, and let $(x_n)$ be its iterates.
Let $\norm{\cdot}$ denote the $\ell_2$-norm and let $\F_L(\R^d)$ denote the class of $L$-smooth convex functions on $\R^d$ whose set of minimizers $X^\star_f\coloneqq\argmin_{x\in\R^d}f(x)$ is nonempty.
Without loss of generality, we assume $L=1$.
In the first setting, the convergence rate is defined as
\begin{equation}\label{eq:r_n}
	R_n(\mathcal{A}) \coloneqq \sup_{d\in\N}
    \sup_{f\in\F_1(\R^d)}
    \sup_{x^\star\in X_f^\star}
    \sup_{x_1\in\R^d\setminus X_f^\star}
    \frac{ f(x_{n+1}) - f(x^\star) }{ \frac{1}{2} \norm{x_1 - x^\star}^2 },
\end{equation}
which is the worst-case ratio of the final function value gap to the squared initial distance to the set of minimizers.
In the second setting, the convergence rate is defined as
\begin{equation}\label{eq:g_n}
	G_n(\mathcal{A}) \coloneqq \sup_{d\in\N}
    \sup_{f\in\F_1(\R^d)}
    \sup_{x^\star\in X_f^\star}
    \sup_{x_1\in\R^d\setminus X_f^\star}
    \frac{ \frac{1}{2}\norm{ \nabla f(x_{n+1}) }^2 }{ f(x_1) - f(x^\star) },
\end{equation}
which is the worst-case ratio of the final squared gradient norm to the initial function value gap.
For simplicity, when $\mathcal{A}$ is GD with a stepsize schedule $\schedule=(\eta_k)_{k\in\N}$, we write $R_n(\schedule)$ and $G_n(\schedule)$ instead of $R_n(\mathcal{A})$ and $G_n(\mathcal{A})$, respectively.
The first definition, $R_n$ \eqref{eq:r_n}, is standard in the literature, while the second one $G_n$ \eqref{eq:g_n} has received increasing attention in recent years \cite{kim:2021,nesterov:2012,taylor:2018}.\footnote{It is worth noting that some works consider a variant of $G_n$ in which $f(x_1)-f(x^\star)$ is replaced by $\smash{\frac{1}{2}\norm{x_1-x^\star}^2}$.
Perhaps surprisingly, this modification drastically changes the convergence rate of GD with a constant stepsize from $O(n^{-1})$ to $O(n^{-2})$ \cite{taylor:2018}.
We do not study this alternative definition, and thus our lower bound do not apply to this setting.}

In this work, we focus on the \emph{anytime} convergence rates of GD.
A convergence rate is said to be \emph{anytime} if the stepsize schedule $\schedule$ does not depend on a prescribed stopping time $n$ and the rate applies to all $n\in\N$.
Otherwise, it is said to be \emph{non-anytime}.
We note that the optimal anytime and non-anytime rates of GD differ in general.
For instance, it was recently shown that they differ by a polylogarithmic factor in \emph{non-}smooth convex optimization \cite{kornowski:2025}.

\subsection{Upper Bounds for GD}
For both $R_n$ and $G_n$, and for both anytime and non-anytime convergence, the standard result for GD is the $O(n^{-1})$ rate \cite{levitin:1966,nesterov:2018,taylor:2018}, attained by a small constant stepsize.
We call any rate of order $o(n^{-1})$ an accelerated rate.
Although it was widely believed that GD cannot achieve an accelerated rate, a series of recent breakthroughs showed that such a rate is indeed achievable by using large stepsizes.
An accelerated \emph{non-anytime} convergence rate for $R_n$ was proved concurrently by Altschuler and Parrilo~\cite{altschuler:2024b} and Grimmer et al.~\cite{grimmer:2023}, with the best known rate of order $O(n^{-\log_2\rho})\approx O(n^{-1.271})$, where $\rho=1+\sqrt{2}$ is the silver ratio.
Subsequently, Zhang and Jiang~\cite{zhang:2024} and Grimmer et al.~\cite{grimmer:2025,grimmer:2025a} established the same $O(n^{-\log_2\rho})$ \emph{non-anytime} convergence rate for $G_n$.
To summarize, \emph{non-anytime} acceleration is achievable for both $R_n$ and $G_n$, and their best known rates are identical up to constant factors.

As for the \emph{anytime} convergence rates, Zhang et al.~\cite{zhang:2025} recently established an accelerated rate of order $O\left(n^{-\smash{\frac{2\log_2\rho}{1+\log_2\rho}}}\right)\approx O(n^{-1.119})$ for $R_n$.
This provides a partial answer to the COLT 2024 open problem posed by Kornowski and Shamir~\cite{kornowski:2024} regarding the optimal anytime convergence rate of GD for $R_n$.
Unlike non-anytime convergence, however, no stepsize schedule is known to achieve an accelerated anytime rate for $G_n$.
As we will show later in this work, achieving such a rate is impossible for any positive stepsize schedule, establishing a fundamental separation between anytime and non-anytime convergence rates for $G_n$.

All existing stepsize schedules that achieve an accelerated rate occasionally use stepsizes larger than $2/L$, which may violate the descent property \cite{altschuler:2024b,grimmer:2023,grimmer:2025,grimmer:2025a,zhang:2024,zhang:2025}.
Indeed, using large stepsizes is necessary to achieve an $o(n^{-1})$ convergence rate \cite{das-gupta:2024,kornowski:2024}.
For example, in the silver stepsize schedule of Altschuler and Parrilo~\cite{altschuler:2024b}, one fourth of the stepsizes exceed $2/L$, and the largest one grows as $\Theta(n^{\log_2\rho}/L) \approx \Theta(n^{1.271}/L)$ with the stopping time $n$.
Although large stepsizes may cause temporary increases in the function value or the gradient norm, these schedules nonetheless guarantee an accelerated convergence rate at the final iterate.

\subsection{Lower Bounds for GD}
To the best of our knowledge, for GD with positive stepsize schedules, the only applicable lower bound is the classical $\Omega(n^{-2})$ one \cite{nemirovsky:1983}, which applies to both $R_n$ and $G_n$, and both anytime and non-anytime rates.\footnotemark
\footnotetext{More precisely, for both $R_n$ and $G_n$, to output an $\varepsilon$-approximate minimizer $\hat{x}$, i.e., $f(\hat{x})- f(x^\star) \leq \varepsilon$ or $\norm{ \nabla f(\hat{x}) }^2 \leq \varepsilon$, any first-order method must have an $\Omega(\varepsilon^{-1/2})$ oracle complexity \cite{nemirovsky:1983,nesterov:2018}.
This translates into an $\Omega(n^{-2})$ lower bound on the anytime and non-anytime convergence rates of GD.}
While this classical lower bounds applies broadly to any first-order gradient method, it remains significantly below the best known upper bounds;
see \Cref{fig:sota_results} for a comparison of existing results.

\ifdefined\arxiv
	\newcommand{\myfontsize}{\ssmall}
\else
	\newcommand{\myfontsize}{\scriptsize}
\fi

\begin{figure}[t]
\definecolor{impfill}{RGB}{247,228,216}
\definecolor{achfill}{RGB}{222,241,211}
\definecolor{imptext}{RGB}{176,36,24}
\definecolor{achtext}{RGB}{50,82,32}
\centering
\begin{tikzpicture}[
  scale=0.15,
  line cap=round,
  line join=round
]
\foreach \x in {23,35,45,55,65}{
  \draw[line width=1pt,dash pattern=on 0pt off 4pt] (\x,-15) -- (\x,30);
}

\filldraw[impfill,draw=red!70!black,line width=1pt]
(0,25) rectangle (23,20); %
\filldraw[impfill,draw=red!70!black,line width=1pt]
(0,20) rectangle (35,15);  %
\filldraw[impfill,draw=red!70!black,line width=1pt]
(0,0) rectangle (23,-5); %
\filldraw[impfill,draw=red!70!black,line width=1pt]
(0,-5) rectangle (65,-10);  %

\fill[achfill,draw=green!50!black,line width=1pt] (45,25) rectangle (83,20); %
\fill[achfill,draw=green!50!black,line width=1pt] (55,20) rectangle (83,15);  %
\fill[achfill,draw=green!50!black,line width=1pt] (45,0) rectangle (83,-5); %
\fill[achfill,draw=green!50!black,line width=1pt] (65,-5) rectangle (83,-10);  %

\draw[line width=1.5pt] (0,20) -- (83,20);
\draw[line width=1.5pt] (0,-5) -- (83,-5);
\draw[dashed, line width=1pt,draw=red!80!black,<->] (23,22.5) -- (45,22.5);
\draw[dashed, line width=1pt,draw=red!80!black,<->] (35,17.5) -- (55,17.5);
\draw[dashed, line width=1pt,draw=red!80!black,<->] (23,-2.5) -- (45,-2.5);

\node[anchor=south] at (34,22.5) {\footnotesize\color{imptext} Unknown};
\node[anchor=north] at (45,17.5) {\footnotesize\color{imptext} Unknown};
\node[anchor=south] at (34,-2.5) {\footnotesize\color{imptext} Unknown};

\node[] at (64,22.5) {\myfontsize Altschuler and Parrilo~\cite{altschuler:2024b}};
\node[] at (69,17.5) {\myfontsize Zhang et al.~\cite{zhang:2025}};
\node[] at (64,-2.5) {\myfontsize Zhang and Jiang~\cite{zhang:2024}, Grimmer et al.~\cite{grimmer:2025,grimmer:2025a}};
\node[] at (74,-7.5) {\myfontsize Taylor et al.~\cite{taylor:2018}};

\node[] at (11.5,22.5) {\myfontsize Nemirovsky and Yudin~\cite{nemirovsky:1983}};
\node[] at (17.5,17.5) {\myfontsize This work (\Cref{thm:function_value_impossibility})};
\node[] at (11.5,-2.5) {\myfontsize Nemirovsky and Yudin~\cite{nemirovsky:1983}};
\node[] at (32.5,-7.5) {\myfontsize This work (\Cref{thm:gradient_norm_impossibility})};

\node[anchor=west] at (-1,28) {\textit{$R_n$ \eqref{eq:r_n}}};
\node[anchor=west] at (-1,3) {\textit{$G_n$ \eqref{eq:g_n}}};

\node[anchor=east] at (84,28) {\textit{Non-anytime}};
\node[anchor=east] at (84,12) {\textit{Anytime}};
\node[anchor=east] at (84,3) {\textit{Non-anytime}};
\node[anchor=east] at (84,-13) {\textit{Anytime}};
\node[anchor=north] at (23,-15) {$n^{-2}$};
\node[anchor=north] at (35,-15) {$n^{-1.334}$};
\node[anchor=north] at (45,-15) {$n^{-1.271}$};
\node[anchor=north] at (55,-15) {$n^{-1.119}$};
\node[anchor=north] at (65,-15) {$n^{-1}$};

\end{tikzpicture}
\caption{The best known upper (green) and lower (red) bounds for GD with an arbitrary positive stepsize schedule.
See also \Cref{tab:sota_results,tab:sota_results_first_order_methods} in \Cref{sec:sota_results} for a summary of existing results.}
\label{fig:sota_results}
\end{figure}

Besides the classical $\Omega(n^{-2})$ lower bound, there are two other lower bounds that apply to \emph{restricted} classes of stepsize schedules of GD.
For both $R_n$ and $G_n$, Grimmer et al.~\cite{grimmer:2025a} established an $\Omega(n^{-\log_2\rho})$ lower bound on the non-anytime convergence rate for all ``basic and composable'' stepsize schedules.
Informally, these are schedules for which a Huber function and the quadratic function $\frac{1}{2}x^2$ are both worst-case instances.
For $R_n$, Kornowski and Shamir~\cite{kornowski:2024} showed that the silver stepsize schedule, which achieves an $O(n^{-\log_2\rho})$ non-anytime convergence rate, does not achieve an $o(n^{-1})$ anytime convergence rate.
In contrast, our results apply to any positive stepsize schedule.

Lastly, from the numerical side, Das Gupta et al.~\cite{das-gupta:2024} minimized $R_n(\schedule)$ over all $\schedule$ for each $n\leq 50$ and found that the resulting convergence rates decay roughly as $O(n^{-1.178})$.
This provides evidence that an $\omega(n^{-2})$ lower bound likely holds for both anytime and non-anytime convergence rates of GD.

\subsection{Contributions and Main Results}
In this work, we establish the following two lower bounds on the anytime convergence rates of GD under arbitrary positive stepsize schedules.
The proofs are provided in \Cref{sec:lower_bounds}.

\begin{theorem}\label{thm:function_value_impossibility}
No stepsize schedule $\schedule\in(0,\infty)^{\N}$ satisfies $R_n(\schedule)=o(n^{-4/3})$.
\end{theorem}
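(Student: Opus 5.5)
We argue by contradiction: assume some schedule $\schedule\in(0,\infty)^{\N}$ has $R_n(\schedule)\le \epsilon_n n^{-4/3}$ with $\epsilon_n\to0$. We then use two families of test functions to extract incompatible constraints on the stepsizes. The plan has three parts: large stepsizes must be rare, they must not be too large, and yet they are needed to accelerate.

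\textbf{Quadratics cap the large stepsizes.} On the one-dimensional quadratic $f(x)=\frac{\lambda}{2}x^2$ with $\lambda\in(0,1]$, GD gives
\[
x_{n+1}=\prod_{k\le n}(1-\lambda\eta_k)\,x_1,
\qquad
\frac{f(x_{n+1})-f^\star}{\frac12 x_1^2}=\lambda\prod_{k\le n}(1-\lambda\eta_k)^2 .
\]
So the anytime assumption says that $p_n(\lambda)\coloneqq\lambda\prod_{k\le n}(1-\lambda\eta_k)^2\le \epsilon_n n^{-4/3}$ for every $\lambda\in(0,1]$ and every $n$. Take $n$ to be the index $m$ of a large step $\eta_m=M$, and choose $\lambda$ slightly away from the root $1/M$. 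Then the factor $(1-\lambda M)^2$ is of order $M^2\lambda^2$, while the earlier factors are controlled by the previous stepsizes. Comparing with $n=m-1$ would bound $M$ in terms of $m$ and the earlier history. For a lone large step this gives $\eta_m\lesssim m^{2/3}$ up to $\epsilon$ factors, because $\lambda M^2\cdot(\text{decay so far}) \lesssim m^{-4/3}$ at $\lambda\approx 1/M$ and the decay so far is $\gtrsim \lambda m$-ish. I would also use $\lambda$ near $1$ to show that steps above $2$ cannot cluster: consecutive amplification factors $|1-\lambda\eta_k|>1$ multiply, so this should bound the \emph{number} of large steps in $[1,n]$.

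\textbf{Huber-type functions force a large total stepsize.} On a Huber function $h_\delta(x)$ (quadratic near $0$, linear with slope $\delta$ outside), GD started far out in the linear region moves by exactly $\eta_k\delta$ per step until it reaches the quadratic zone. Since the initial distance is $D$, the iterate stays in the linear region unless $\sum_{k\le n}\eta_k\gtrsim D/\delta$. In that regime $f(x_{n+1})-f^\star\approx \delta(D-\delta\sum\eta_k)$, and optimizing $\delta$ gives $R_n\gtrsim 1/\sum_{k\le n}\eta_k$. Hence we need $S_n\coloneqq\sum_{k\le n}\eta_k\gtrsim n^{4/3}/\epsilon_n$. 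A variant with both sides and overshoot would further show that a single huge step overshooting the minimizer is penalized; this is presumably the ``variant of Huber'' mentioned in the abstract. It would bound each large step by roughly the sum of the preceding ones.

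\textbf{Combining the bounds.} The small steps ($\eta_k\le 2$, say) contribute at most $2n$ to $S_n$. So $S_n\gg n^{4/3}$ must come from the large steps, whose number and magnitude are bounded by the previous two parts. If the number of large steps is $O(n^{a})$ and each is $O(n^{b})$, we would need $a+b\ge 4/3$. The quadratic analysis should deliver the matching constraint, with the $\epsilon_n$ slack giving the contradiction.

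\textbf{Main obstacle.} The delicate step is the quadratic analysis, which must bound counts and magnitudes jointly, since a product over many stepsizes can be small for all $\lambda$ only if the roots $1/\eta_k$ are spread. I would first try to pick $\lambda=1/\eta_m$ perturbed and lower bound $\prod_{k<m}|1-\lambda\eta_k|$ using that small steps give factor at least $1-\lambda\eta_k\ge e^{-2\lambda\eta_k}$. I would then balance the exponent $4/3$ by requiring the anytime bound at both $n=m-1$ and $n=m$; getting exactly $4/3$ from this is the part I expect to require the most care.
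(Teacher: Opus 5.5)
Your outline has the right shape: a bound $R_n\gtrsim 1/\eta_{1:n}$ (your Huber argument for this is fine), limits on how many and how large the big steps are, and an overshooting two-sided Huber variant. However, the two quantitative ingredients that actually produce the exponent $4/3$ are missing or assigned to the wrong tool, and the combination step as you describe it cannot close.

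\textbf{Counting.} Looking at $\lambda$ near $1$ cannot bound the number of steps above $2$, because small steps can annihilate the product there (any $\eta_k=1$ makes it vanish at $\lambda=1$). Indeed, the Chebyshev multiset $\{\gamma_{n,k}\}$ has $\Theta(n)$ steps above $2$ while $Q_n=(2n+1)^{-2}$. So single-threshold bookkeeping of the form ``$O(n^a)$ large steps, each $O(n^b)$'' has $a=1$ and yields nothing. What you need is a threshold-dependent bound $N_n(t)\lesssim \log n+n/\sqrt{t}$. The paper gets it by splitting $\lambda\prod_k(1-\lambda\eta_k)^2$ into two factors. The factor with $\eta_k\le\theta$ is a polynomial of degree at most $2n+1$ with $p'(0)=1$. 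By Chebyshev extremality and the Markov brothers' inequality, it cannot be below roughly $n^{-2}e^{-O(n/\sqrt{\theta})}$ everywhere on $[2/\theta,1]$. The factor with $\eta_k>\theta$ is at least $e^{N_n(2\theta)}$ on that interval. Integrating over thresholds gives $\eta_{1:n}=\int_0^{M_n}N_n(t)\,\du t\lesssim n\sqrt{M_n}+M_n\log n+n$, and the $n\sqrt{M_n}$ term is where $4/3$ comes from.

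\textbf{Magnitude.} Your quadratic heuristic for $\eta_m\lesssim m^{2/3}$ is not an argument: at $\lambda\approx 1/M$ the factor $(1-\lambda M)^2$ is near zero, not $\approx\lambda^2M^2$. The natural quadratic estimate, taking $\lambda=1/(2\eta_{1:m-1})$ with the Weierstrass product inequality, only gives $\eta_m\lesssim\eta_{1:m-1}^{3/2}\sqrt{R_m}$. That is too weak: substituted into the sum bound, it produces no contradiction. The needed input is the relative bound from the asymmetric Huber function. With $\delta=1/(1+\eta_{1:n-1})$, the iterate lands exactly on the kink at step $n$ and overshoots to $-(\eta_n-1)\delta$. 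Tuning the left slope optimally then gives $\eta_n\le 1+(1+\eta_{1:n-1})\sqrt{R_n}$. The factor $\sqrt{R_n}$ is essential; bounding a large step by ``roughly the sum of the preceding ones'' would not suffice.

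\textbf{Closing the argument.} Since the Huber bound controls only the last step, you must apply it at record indices $n$ where $\eta_n=\max_{k\le n}\eta_k$. There are infinitely many of these, because $R_n=o(n^{-1})$ forces $\limsup_n\eta_n=\infty$. At those $n$, combining the two bounds gives $\eta_{1:n}\lesssim n\sqrt{\eta_{1:n}}\,R_n^{1/4}+\eta_{1:n}\sqrt{R_n}\log n+n$. Using $\eta_{1:n}\gtrsim 1/R_n$, this becomes $nR_n^{3/4}+\sqrt{R_n}\log n+nR_n\gtrsim 1$, which contradicts $R_n=o(n^{-4/3})$.
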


\begin{theorem}\label{thm:gradient_norm_impossibility}
No stepsize schedule $\schedule\in(0,\infty)^{\N}$ satisfies $G_n(\schedule)=o(n^{-1})$.
\end{theorem}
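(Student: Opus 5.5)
The plan is to exhibit, for every positive schedule $\schedule$, two one-dimensional families of test functions. One family punishes a small cumulative stepsize and the other punishes large individual stepsizes. Since both families lie in $\F_1(\R)$, the supremum over $d$ in \eqref{eq:g_n} is not needed. The goal is to show that along infinitely many $n$ we have $G_n(\schedule)\ge c/n$ for an absolute constant $c>0$. Write $S_n\coloneqq\sum_{k\le n}\eta_k$.

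\emph{Step 1 (Huber functions: small total stepsize is bad).} Take $f_\delta(x)=\delta|x|-\delta^2/2$ for $|x|\ge\delta$ and $x^2/2$ otherwise, which is $1$-smooth and convex. Start at $x_1>\delta+\delta S_n$. Then every iterate stays in the linear region, $x_{k+1}=x_k-\eta_k\delta$, and $\abs{\nabla f(x_{n+1})}=\delta$. This gives a ratio of order $\delta^2/(\delta x_1)\approx 1/(1+S_n)$. Hence $G_n(\schedule)\gtrsim 1/(1+S_n)$, and $G_n=o(n^{-1})$ forces $S_n/n\to\infty$, i.e.\ the average stepsize must diverge. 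A refinement would start $x_1$ exactly at the boundary so that the iterates leave the linear region only at the last steps. I would use such variants, where the iterate may cross the kink, to show that a single very large step cannot by itself "jump" the iterate into the quadratic zone. Concretely, I would show that if $\eta_k$ is huge relative to $S_{k-1}$, then some starting point forces the iterate to overshoot to the other side of the linear region. This bounds the \emph{magnitude} of large stepsizes, e.g.\ $\eta_k\lesssim 1+S_{k-1}$.

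\emph{Step 2 (quadratics: many large steps are bad).} For $f_\lambda(x)=\lambda x^2/2$ with $\lambda\in(0,1]$, a direct computation gives
\[
G_n(\schedule)\ \ge\ \sup_{\lambda\in(0,1]}\lambda\prod_{k\le n}(1-\eta_k\lambda)^2 .
\]
Steps with $\eta_k>2/\lambda$ amplify this product, while small steps contract it by at most roughly $e^{-2\lambda\eta_k}$ when $\eta_k\lambda\le1/2$. I would choose $\lambda$ comparable to $1/\eta_k$ for a large step $\eta_k$ and balance the amplification from large steps against the contraction from the small ones. This should show that the number of steps with $\eta_k\ge M$ among the first $n$ is at most $O(n/M)$, up to logarithms, whenever $G_n=o(n^{-1})$. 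Combining this counting bound with the magnitude bound of Step 1 via a dyadic decomposition over $M=2^j$ should give $S_n=O(n)$, possibly up to a constant that can be absorbed. That contradicts $S_n/n\to\infty$ from Step 1.

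\emph{Main obstacle.} The delicate part is Step 2 together with the magnitude bound. The quadratic certificate must hold at anytime stopping points, and the product mixes contributions from all scales, so the $\lambda$ chosen for one large step is affected by other large steps of similar size. I expect to need the anytime freedom to pick $n$ right after a large step $\eta_k$, to evaluate at $\lambda\approx 3/\eta_k$ so that the final factor is at least a constant larger than $1$, and to lower-bound the remaining factors by $\exp(-O(\lambda S_{k-1}))$. Turning this into a clean inequality $\eta_k\lesssim S_{k-1}/\log(\cdot)$ or similar, and summing it without losing the linear rate, is where the argument could break. If the dyadic summation only yields $S_n=O(n\log n)$, I would try to sharpen the Huber bound of Step 1 by a logarithmic factor instead.
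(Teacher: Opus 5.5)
Your first claim in Step 1, $G_n\ge 1/(1+2S_n)$ and hence $S_n/n\to\infty$, is correct. It plays the same role as the paper's quadratic bound $Q_n\ge 1/(4(1+2\eta_{1:n}))$. The gap is in the upper bound on $S_n$, in two places.

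First, the counting bound you aim for in Step 2, $N_n(M)=O(n/M)$ up to logarithms, cannot be extracted from quadratics. The Chebyshev stepsizes $\gamma_{n,k}$ satisfy $Q_n=(2n+1)^{-2}$ yet have $N_n(M)\ge\frac{2n+1}{\pi\sqrt{M}}-1$. So $O(\log n+n/\sqrt{M})$ is the best quadratic counting bound, and that is what the paper proves.

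Your concrete mechanism does not give a counting bound at all. At $n=k$ and $\lambda=3/\eta_k$ it yields at best $\frac{12}{\eta_k}e^{-cS_{k-1}/\eta_k}\lesssim G_k$. This excludes only an intermediate range of $\eta_k/S_{k-1}$, and it is vacuous once $G_kS_{k-1}$ exceeds an absolute constant. Other steps with $\eta_i\lambda\approx 1$ can also kill the product.

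Even granting $O(n/M)$, the dyadic sum gives $S_n=O(n\log M_n)$. With your magnitude bound $M_n\lesssim 1+S_n$ this is $O(n\log n)$, which is consistent with $S_n=\omega(n)$. The proposed logarithmic sharpening of the Huber bound has no mechanism behind it.

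Second, your magnitude bound $\eta_k\lesssim 1+S_{k-1}$ does not involve $G_k$, and a symmetric Huber overshoot cannot even deliver it. After the overshoot the final gradient is capped by the slope $\delta$, so the certified ratio $\frac{\delta^2/2}{\delta-\delta^2/2}$ does not depend on $\eta_k$.

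The missing idea is an \emph{asymmetric} Huber function, which converts the overshoot into a large final gradient. Use slope $\delta=1/(1+S_{n-1})$ on $x\ge\delta$ and start at $x_1=1$. The iterates $x_k=1-\delta S_{k-1}$ stay in the linear region and land exactly at $x_n=\delta$. If $\eta_n>1$, then $x_{n+1}=-(\eta_n-1)\delta$. Take the left slope equal to $\varepsilon=(\eta_n-1)\delta$, with the function quadratic on $[-\varepsilon,\delta]$. Then $\abs{f'(x_{n+1})}=\varepsilon$ and $f(x_1)-f^\star=\delta-\delta^2/2$. Hence $G_n\ge(\eta_n-1)^2/(1+2S_{n-1})$, i.e.\ $\eta_n\le 1+\sqrt{(1+2S_{n-1})G_n}$.

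This magnitude bound carries the factor $\sqrt{G_n}$, and because the schedule is anytime it holds at every $k$. Summing over $k\le n$ gives $S_n\le n+\sqrt{1+2S_n}\sum_{k\le n}\sqrt{G_k}$. Since $G_k=o(1/k)$, the sum is $o(\sqrt{n})$, so $S_n\le n+o(\sqrt{nS_n})$. This forces $S_n=O(n)$, contradicting Step 1. No bound on the number of large steps is needed for this theorem; the paper uses that ingredient only for $R_n$.
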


\paragraph{Remarks}
We make two remarks on \Cref{thm:function_value_impossibility,thm:gradient_norm_impossibility}.
First, they do not imply that $R_n(\schedule)=\Omega(n^{-4/3})$ and $G_n(\schedule)=\Omega(n^{-1})$ for all $\schedule\in(0,\infty)^{\N}$, which would be a stronger statement.
More precisely, our theorems only rule out stepsize schedules that achieve these rates for \emph{all} $n\in\N$.
They leave open the possibility that a stepsize schedule $\schedule\in(0,\infty)^{\N}$ achieves $R_n(\schedule)=o(n^{-4/3})$ or $G_n(\schedule)=o(n^{-1})$ for infinitely many, but not all, $n\in\N$.
In contrast, the stronger statement would rule out this possibility entirely.
Second, both \Cref{thm:function_value_impossibility,thm:gradient_norm_impossibility} apply to minimizing univariate functions, as their proofs only construct such functions.

\paragraph{Implications}
We highlight a few implications of \Cref{thm:function_value_impossibility,thm:gradient_norm_impossibility}.
Regarding \Cref{thm:function_value_impossibility}, by combining it with the upper bound of Zhang et al.~\cite{zhang:2025}, the answer to the COLT open problem \cite{kornowski:2024} lies between $n^{-1.334}$ and $n^{-1.119}$.
Furthermore, since accelerated methods achieve an $O(n^{-2})$ anytime rate for $R_n$ \cite{nesterov:1983}, 
GD with a positive stepsize schedule is provably suboptimal among first-order methods.

Regarding \Cref{thm:gradient_norm_impossibility}, we note three consequences.
First, it justifies the absence of an $o(n^{-1})$ anytime convergence rate for $G_n$ in the literature, and proves a special case of the conjecture of Diakonikolas and Wang~\cite{diakonikolas:2022}.\footnote{Diakonikolas and Wang~\cite{diakonikolas:2022} conjectured that every algorithm with the update rule $x_{n+1} = x_1 - \sum_{k=1}^n \beta_{k,n}\nabla f(x_k)$ for all $n\in\N$ satisfies $G_n=\Omega(n^{-1})$.
We show that in the special case where $\beta_{k,n}=\eta_k>0$ is positive and independent of $n$, it is impossible to achieve $G_n=o(n^{-1})$.}
Second, it matches the $O(n^{-1})$ anytime convergence rate achieved by a small constant stepsize \cite{rotaru:2026,taylor:2018}, thereby closing the gap between upper and lower bounds.
Finally, while Zhang and Jiang~\cite{zhang:2024} and Grimmer et al.~\cite{grimmer:2025a} showed that an $O(n^{-1.271})$ rate is achievable in the non-anytime setting, \Cref{thm:gradient_norm_impossibility} shows that this rate is unachievable in the anytime setting.
This quantifies the price of not knowing the stopping time in advance.

\subsection{Technical Contributions}
The main challenge in establishing an $\omega(n^{-2})$ lower bound lies in controlling the \emph{magnitude} and \emph{number} of large stepsizes, which together constrain the progress GD can make.
Except for Corollary~3 of Kornowski and Shamir~\cite{kornowski:2024} which establishes an upper bound on the magnitude in the $R_n$ setting, we are unaware of any existing results that bound the magnitude and number of large stepsizes.
Our technical contributions address exactly these two aspects.

In \Cref{sec:quadratic}, we establish an upper bound on the number of large stepsizes.
Our analysis builds on existing techniques for quadratic optimization, which reduce the problem of determining the optimal worst-case convergence rate to bounding the norm of a polynomial \cite{daspremont:2021,nemirovsky:1992,young:1953}.
Informally speaking, we show that if a stepsize schedule converges on all convex quadratic functions, then the proportion of stepsizes larger than a threshold $t>0$ decays at an $O(1/\sqrt{t})$ rate.
As we are unaware of comparable results in the literature, we believe this bound is of independent interest.

In \Cref{sec:asymmetric_huber}, we extend the upper bound of Kornowski and Shamir~\cite{kornowski:2024} on the magnitude of large stepsizes in two directions.
First, we extend their upper bound to the $G_n$ setting.
Second, while they constructed a Huber-like function that is difficult for GD to optimize with a large \emph{last} stepsize, we generalize their construction to a large stepsize at an arbitrary iteration.

\begin{figure}[t]
\definecolor{MyRed}{RGB}{247,228,216}
\definecolor{MyGreen}{RGB}{222,241,211}
\centering
\begin{tikzpicture}[
	scale=1.0,
    transform shape,
    node distance=1.2cm and 1.7cm,
    box/.style={
        draw,
        rounded corners=0pt,
        minimum width=3.4cm,
        minimum height=1.15cm,
        align=center,
        thick
    },
    smallbox/.style={
        draw,
        rounded corners=0pt,
        minimum width=1.7cm,
        minimum height=0.75cm,
        align=center,
        thick
    },
    arrow/.style={
        -{Latex[length=3mm]},
        thick
    },
    section2/.style={
        draw=red!70!black,
        rounded corners=0pt,
        fill=MyRed,
        fill opacity=0.25,
        thick,
        densely dashed,
        inner sep=0.2cm,
        minimum width=11cm
    },
    section3/.style={
        draw=green!30!black,
        rounded corners=0pt,
        fill=MyGreen,
        fill opacity=0.25,
        thick,
        densely dashed,
        inner sep=0.2cm,
        minimum width=11cm,
    }
]

\node[smallbox] (quadratic) {\small Convex quadratic functions};

\node[box, below left=0.7cm and -2.3cm of quadratic, fill=MyRed, draw=red!70!black] (number)
    {\small \Cref{lem:number_bound}\\
     \small Bound on number of large stepsizes};

\node[box, below=0.5cm of number] (sum)
    {\small \Cref{lem:bound_on_large_step_to_bound_on_sum}\\
     \small Bound on sum of stepsizes\\
     \small in terms of maximum stepsizes};

\node[box, below right=0.7cm and -1.6cm of quadratic] (lb)
    {\small \Cref{lem:convergence_rate_lower_bound_by_sum_of_stepsizes}\\
     \small Connect convergence rates\\
     \small with sum of stepsizes};

\node[box, below=0.8cm of sum] (functionvalue) {\small \Cref{thm:function_value_impossibility}\\
\small Lower Bound for $R_n$};

\node[box, below=2.5cm of lb] (gradientnorm) {\small \Cref{thm:gradient_norm_impossibility}\\
\small Lower Bound for $G_n$};

\draw[arrow] (quadratic.south) -- ++(0,-0.25) -| (number.north);
\draw[arrow] (quadratic.south) -- ++(0,-0.25) -| (lb.north);

\draw[arrow] (number) -- (sum);
\draw[arrow] (sum) -- (functionvalue);
\draw[arrow] (lb.south) -- ++(0,-2.1) -| ([xshift=1cm]functionvalue.north);

\draw[arrow] (lb.south) -- (gradientnorm.north);

\node[
    section2,
    fit=(quadratic)(number)(sum)(lb),
] (sec2) {};

\node[align=left, above left=-0.5cm and 1.5cm of quadratic] (sec2title) {\bf\color{red!70!black}\Cref{sec:quadratic}};

\node[box, below=2.7cm of sec2.south, fill=MyGreen, draw=green!30!black] (magnitude)
    {\small \Cref{lem:asymmetric_huber}\\
    \small Bound on magnitude of large stepsizes};

\node[smallbox, below=0.5cm of magnitude.south] (asym) {A\small symmetric Huber functions};

\draw[arrow] (asym) -- (magnitude);
\draw[arrow] (magnitude.north) -- ++(0,0.5) -| (functionvalue.south);
\draw[arrow] (magnitude.north) -- ++(0,0.5) -| (gradientnorm.south);

\node[
    section3,
    fit=(asym)(magnitude),
] (sec3) {};

\node[align=left, below=7cm of sec2title] {\bf\color{green!30!black}\Cref{sec:asymmetric_huber}};
\end{tikzpicture}

\caption{The key building blocks for the proofs of \Cref{thm:function_value_impossibility,thm:gradient_norm_impossibility}.
The two shaded blocks highlight our main technical contributions (\Cref{lem:number_bound,lem:asymmetric_huber}).}
\label{fig:roadmap}
\end{figure}

\subsection{Roadmap}

We outline the roadmap of our analysis in \Cref{fig:roadmap}.
In \Cref{sec:quadratic}, we study the convergence rates of GD on convex quadratic functions.
We explain why controlling the number and magnitude of large stepsizes is important, and prove upper bounds on the number of large stepsizes.
In \Cref{sec:asymmetric_huber}, we introduce the asymmetric Huber functions and establish upper bounds on the magnitude of large stepsizes.
In \Cref{sec:lower_bounds}, we combine these results to prove \Cref{thm:function_value_impossibility,thm:gradient_norm_impossibility}.

\paragraph{Notation}
For $n\in\N$, let $[n]\coloneqq\{1,2,\ldots,n\}$.
The set of all infinite sequences of positive numbers is denoted by $(0,\infty)^{\N}$.
For $(a_n), (b_n)\in(0,\infty)^{\N}$, we write $a_n=O(b_n)$ and $b_n=\Omega(a_n)$ if $\limsup_{n\to\infty}(a_n/b_n)<\infty$, or equivalently, $a_n\leq Cb_n$ for all $n\in\N$ and some constant $C>0$.
We write $a_n=o(b_n)$ and $b_n=\omega(a_n)$ if $\lim_{n\to\infty}(a_n/b_n)=0$.
We write $a_n=\Theta(b_n)$ if $a_n=O(b_n)$ and $a_n=\Omega(b_n)$.
A differentiable function $f:\R^d\to\R$ is $L$-smooth if its gradient is $L$-Lipschitz continuous, i.e., $\norm{ \nabla f(x) - \nabla f(y) } \leq L \norm{ x - y }$ for all $x,y\in\R^d$.
For a stepsize schedule $\schedule = (\eta_k)_{k\in\N} \in(0,\infty)^{\N}$, we denote its partial sum by $\eta_{m:n}=\sum_{k=m}^n \eta_k$ for $m\leq n$, and define $\eta_{m:n}=0$ if $m>n$.
We denote the set of real polynomials of degree $n$ by $\mathscr{P}_n$.
The infinity norm of a polynomial $p\in\mathscr{P}_n$ over an interval $[a,b]\subseteq\R$ is defined as $\norm{ p }_{[a,b]}\coloneqq \max_{x\in [a,b]} \abs{p(x)}$.

\section{GD on Convex Quadratic Functions}\label{sec:quadratic}

In this section, we present an in-depth analysis of the convergence rate of GD on convex quadratic functions.
Our goal is to establish an upper bound on the number of large stepsizes in \Cref{lem:number_bound}.

For any $\lambda\in[0,1]$, consider the quadratic function $f(x) = \frac{\lambda}{2}x^2$, which is $1$-smooth, convex, and minimized at $x^\star=0$.
Let $x_1=1$ be the initial point of GD.
Since $f'(x) = \lambda x$, we have $x_{n+1} = \prod_{k=1}^n (1-\eta_k\lambda)$, and thus for all $n\in\N$,
\begin{align*}
	R_n(\schedule) &\geq \lambda x_{n+1}^2 = \lambda \prod_{k=1}^n (1 - \eta_k \lambda )^2,
	\\
	G_n(\schedule) &\geq \lambda x_{n+1}^2 = \lambda \prod_{k=1}^n (1 - \eta_k \lambda )^2.
\end{align*}
By taking the maximum over all $\lambda\in[0,1]$ on the right-hand sides, we conclude that $R_n(\schedule)\geq Q_n(\schedule)$ and $G_n(\schedule) \geq Q_n(\schedule)$ for all $n\in\N$, where
\begin{equation}\label{eq:quadratic_lower_bound}
	Q_n(\schedule)
	\coloneqq \max_{\lambda\in[0,1]}\lambda \prod_{k=1}^n (1 - \eta_k \lambda )^2.
\end{equation}
Deriving lower bounds on $Q_n$ is challenging, as there is no explicit formula for the optimal $\lambda$ that maximizes the polynomial in \eqref{eq:quadratic_lower_bound} when $n$ is large.
In what follows, we derive lower bounds on $Q_n$ by choosing several specific stepsize-dependent values of $\lambda$.
The proofs rely on several results from polynomial approximation theory, which are collected in \Cref{sec:helper_lemmas}.

\subsection{Convergence Rate and Sum of Stepsizes}

We now present the following lemma, which is the starting point of the proofs of \Cref{thm:function_value_impossibility,thm:gradient_norm_impossibility}.

\begin{lemma}\label{lem:convergence_rate_lower_bound_by_sum_of_stepsizes}
For any $\schedule\in(0,\infty)^{\N}$,
\begin{enumerate}[label=(\roman*)]
    \item we have $Q_n(\schedule) \geq \frac{1}{4(1+2\eta_{1:n})}$ for all $n\in\N$.
    \item If $Q_n(\schedule) = o(n^{-1})$, then $\limsup_{n\to\infty}\eta_n = \infty$.
\end{enumerate}
\end{lemma}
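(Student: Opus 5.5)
For part (i), I will choose a single stepsize-dependent value of $\lambda$ in \eqref{eq:quadratic_lower_bound}, namely $\lambda = \frac{1}{1+2\eta_{1:n}}\in(0,1]$. With this choice we have $\eta_k\lambda \le \eta_{1:n}\lambda < \tfrac12$ for every $k\le n$, so each factor satisfies $1-\eta_k\lambda\in(\tfrac12,1]$.

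To bound the product, I will use the elementary inequality $1-x\ge e^{-2x}$, which is valid for $x\in[0,\tfrac12]$ because $\log(1-x)\ge -2x$ there (alternatively, $\prod_k(1-a_k)\ge 1-\sum_k a_k$ for $a_k\in[0,1]$). The Weierstrass product inequality is the cleanest route:
\[
\prod_{k=1}^n(1-\eta_k\lambda)\ \ge\ 1-\lambda\eta_{1:n} = \frac{1+\eta_{1:n}}{1+2\eta_{1:n}}\ \ge\ \tfrac12 .
\]
Squaring and multiplying by $\lambda$ gives $Q_n(\schedule)\ge \lambda\cdot\tfrac14=\frac{1}{4(1+2\eta_{1:n})}$, which is the claim. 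This step is routine.

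For part (ii), I will argue by contradiction. Suppose $\limsup_n \eta_n<\infty$, so $\eta_k\le M$ for all $k$ and some $M$. Then $\eta_{1:n}\le Mn$, and part (i) yields $Q_n(\schedule)\ge \frac{1}{4(1+2Mn)}=\Omega(n^{-1})$. This contradicts $Q_n(\schedule)=o(n^{-1})$. Note that boundedness of the whole sequence is equivalent to a finite $\limsup$, since each individual $\eta_k$ is finite.

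Neither part has a real obstacle. The only point to check is that the chosen $\lambda$ lies in $[0,1]$ and keeps every factor $1-\eta_k\lambda$ nonnegative, so that the product inequality applies; taking $\lambda\le 1/(2\eta_{1:n})$ guarantees both.
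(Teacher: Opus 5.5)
Your proposal is correct and follows the paper's proof essentially step for step: the same choice $\lambda=\frac{1}{1+2\eta_{1:n}}$ with the Weierstrass product inequality for (i), and the same boundedness contradiction for (ii). Your remark that a finite $\limsup$ implies a uniformly bounded sequence cleanly fills in a step the paper leaves implicit.
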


\Cref{lem:convergence_rate_lower_bound_by_sum_of_stepsizes} (i) shows that an upper bound on the sum of stepsizes directly translates into a lower bound on the convergence rate.
This motivates our strategy of controlling the magnitude and the number of large stepsizes.
On the other hand, \Cref{lem:convergence_rate_lower_bound_by_sum_of_stepsizes} (ii) explains the necessity of large stepsizes for achieving an accelerated rate.
This result is identical to Theorem~1 of Kornowski and Shamir~\cite{kornowski:2024}, and we include its proof for completeness.

\begin{proof}
For (i), by choosing $\lambda_0 = \frac{1}{1+2\eta_{1:n}}\in[0,1]$ in \eqref{eq:quadratic_lower_bound}, we obtain
\[
	Q_n(\schedule) \geq \lambda_0 \prod_{k=1}^n (1-\eta_k\lambda_0)^2
	\geq \lambda_0 \left( 1 - \lambda_0 \eta_{1:n} \right)^2,
\]
where the last inequality follows from the Weierstrass product inequality.
Since $1-\lambda_0\eta_{1:n} = \frac{1+\eta_{1:n}}{1+2\eta_{1:n}}\geq\frac{1}{2}$, we conclude that $Q_n(\schedule)\geq \frac{\lambda_0}{4} = \frac{1}{4(1+2\eta_{1:n})}$.
For (ii), suppose $Q_n(\schedule)=o(n^{-1})$ but $\limsup_{n\to\infty}\eta_n<\infty$.
By (i), we have $Q_n(\schedule) \geq (4(1+nM))^{-1} = \Omega(n^{-1})$, which contradicts the assumption and completes the proof.
\end{proof}

\begin{remark}
The lower bound in \Cref{lem:convergence_rate_lower_bound_by_sum_of_stepsizes} (i) may remind readers about the two lower bounds, $R_n(\schedule)\geq\frac{1}{1+2\eta_{1:n}}$ and $G_n(\schedule)\geq\frac{1}{1+2\eta_{1:n}}$, both of which can be derived from the Huber function:
\begin{equation}\label{eq:huber}
	f(x) = \begin{cases}
		\frac{1}{2}x^2 &\text{if } \abs{x}\leq\delta, \\
		\delta\abs{x} - \frac{1}{2}\delta^2 &\text{if }\abs{x}\geq\delta,
	\end{cases}
\end{equation}
with $x_1=1$ and some properly chosen $\delta$.
Although both bounds are commonly used in the literature (see, e.g., \cite{drori:2014,grimmer:2025a,taylor:2017a}), we do not use them in our proof.
The reason is that they only improve  \Cref{lem:convergence_rate_lower_bound_by_sum_of_stepsizes} (i) by a constant factor, so incorporating them will not improve the order of our results.
\end{remark}

\subsection{Upper Bound on Number of Large Stepsizes}

Now we present our first technical contribution, an upper bound on the number of large stepsizes.
We first introduce a few definitions.
Let $\schedule\in(0,\infty)^{\N}$ be a stepsize schedule.
For any stopping time $n\in\N$, let $\mu_n \coloneqq  \sum_{k=1}^n \delta_{\eta_k}$ be the counting measure of the first $n$ stepsizes, where $\delta_x$ denotes the Dirac measure at $x$.
Let
\[
	N_n(t) \coloneqq  \mu_n((t,\infty)) = \abs{ \{ k\in[n]: \eta_k > t \} }
\]
denote the number of stepsizes in the first $n$ iterations that are larger than a threshold $t\geq 0$.
This function is non-increasing and piecewise constant.
Note that $N_n(0)=n$ and $N_n(t)=0$ for all $t \geq \max_{k\in[n]}\eta_k$, and we have the trivial upper bound
\begin{equation}\label{eq:trivial_upper_bound_on_number}
	N_n(t) \leq n, \quad\forall t\geq 0.
\end{equation}
We establish a tighter upper bound in the following lemma.

\begin{lemma}\label{lem:number_bound}
For any $\schedule\in(0,\infty)^{\N}$ and any $n\in\N$, we have
\[
    N_n(t) \leq \log Q_n(\schedule) + 2\log(6n) + \frac{8n}{\sqrt{t}-2}, \quad\forall t>8.
\]
\end{lemma}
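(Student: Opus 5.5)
The plan is to bound $\log Q_n(\schedule)$ from below by averaging $\log\bigl(\lambda\prod_k(1-\eta_k\lambda)^2\bigr)$ against a well-chosen probability measure $\nu$ on $[0,1]$. Since a maximum dominates any average,
\[
	\log Q_n(\schedule) \;\ge\; \int \log\lambda\,\du\nu(\lambda) \;+\; 2\sum_{k=1}^n \int \log\abs{1-\eta_k\lambda}\,\du\nu(\lambda).
\]
This turns the problem into one about single roots. Each stepsize contributes a logarithmic potential $g_\nu(\eta)\coloneqq\int\log\abs{1-\eta\lambda}\,\du\nu(\lambda)$, which is explicit for equilibrium (arcsine-type) measures of an interval. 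The target inequality would then follow from three estimates:
\begin{itemize}
\item a uniform gain $2g_\nu(\eta)\ge 1$ for every large stepsize $\eta>t$;
\item a small loss $2g_\nu(\eta)\ge -8/(\sqrt t-2)$ for every small stepsize $\eta\le t$;
\item a normalization $\int\log\lambda\,\du\nu\ge -2\log(6n)$.
\end{itemize}
Summing, the large steps contribute at least $N_n(t)$, the at most $n$ small steps cost at most $8n/(\sqrt t-2)$, and rearranging gives the stated bound.

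For the large steps, I would use that for the arcsine measure $\omega_{[a,b]}$ and a root $r=1/\eta\in[a,b]$,
\[
	\int\log\abs{\lambda-r}\,\du\omega_{[a,b]}(\lambda) = \log\frac{b-a}{4}.
\]
Hence $g(\eta)=\log\bigl(\eta(b-a)/4\bigr)$, and this is at least $1/2$ once $\eta(b-a)\ge 4\sqrt{\eu}$. Roots lying outside $[a,b]$ only increase the potential relative to this value. This is where the threshold $t>8$ and the factor $\sqrt t$ should enter: I would take the interval to have length of order $1/\sqrt t$ in suitable coordinates, so that $t(b-a)\gtrsim\sqrt t\ge$ const.

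The term $2\log(6n)$ is the cost of keeping $\lambda$ away from $0$, where the prefactor $\lambda$ kills the product. For this I would either put $a$ at a small positive value of order $1/n^2$, or discretize and use a Markov/Remez-type inequality from \Cref{sec:helper_lemmas}. Such an inequality controls a degree-$(2n+1)$ polynomial near $0$ by its values elsewhere, and that is where a $\log n$ loss naturally appears.

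The main obstacle is the second estimate, the uniform small loss for $\eta\le t$. Stepsizes whose roots $1/\eta$ fall inside the support of $\nu$ contribute $\log(\eta(b-a)/4)$. For $\eta$ of order $1/(b-a)$ this is a constant-size negative number, not $O(1/\sqrt t)$. A single arcsine measure therefore does not suffice. What I would try first is a measure supported near $\lambda\approx 1/\sqrt{t}\cdot c$ and away from $0$, or a two-scale mixture of two measures. The aim is that roots of small stepsizes lie to the right of the support, giving $\log\abs{1-\eta\lambda}\approx-\eta\lambda$ of size $O(\eta\lambda)$ on average. Roots of large stepsizes, $1/\eta<1/t$, should lie to the left of the support, where $\abs{1-\eta\lambda}\ge\eta\lambda-1\ge\sqrt t\,c-1$.

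The intermediate regime $\eta\in(\sqrt t,t]$ is the delicate one. There I expect to need the polynomial comparison lemmas of \Cref{sec:helper_lemmas}, namely Chebyshev extremality on an interval, to show that these roots cannot simultaneously cancel the gain from large stepsizes. This should cost only a factor $1/(\sqrt t-2)$ per step, with the $-2$ arising from the Chebyshev normalization $T_m(1+2/(\sqrt t-2))$.
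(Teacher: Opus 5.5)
\textbf{There is a genuine gap: your second estimate is never proved, and neither of your candidate measures satisfies it.}

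\emph{What is fine.} The reduction $\log Q_n(\schedule)\ge\int\log\lambda\,\du\nu+2\sum_k g_\nu(\eta_k)$ is valid. Your large-stepsize estimate is in fact trivial for any $\nu$ supported in $[4/t,1]$: for $\eta>t$ you get $\abs{1-\eta\lambda}\ge 3$ pointwise. So the whole lemma rests on the uniform small-loss estimate.

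\emph{Why your candidates fail.} For an arcsine measure on $[a,b]\subseteq[0,1]$, the stepsize $\eta=1/b$ gives $g_\nu(\eta)=\log\frac{b-a}{4b}\le-\log 4$, as you note. For a measure concentrated near $c/\sqrt t$, any $\eta\approx\sqrt t/(2c)\le t$ has $\abs{1-\eta\lambda}\approx 1/2$ on the support. Then $n$ such stepsizes cost $\Theta(n)$ rather than $O(n/\sqrt t)$.

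\emph{Why the fallback does not repair it.} Chebyshev extremality bounds the maximum over $\lambda$ of a whole polynomial. It does not bound the $\nu$-average of a single factor, so it does not fit your per-stepsize bookkeeping as written. Also, the $-2$ in $\sqrt t-2$ does not come from evaluating $T_m$ as you guess.

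\textbf{The missing idea: restrict $\lambda$ instead of averaging.} Set $\theta=t/2$.
\begin{itemize}
\item For every $\lambda\in[2/\theta,1]$ and every $\eta_k>\theta$, you have $\abs{1-\lambda\eta_k}\ge\frac{2}{\theta}\eta_k-1\ge 1$. So the large-stepsize product is bounded below uniformly on that interval.
\item You may therefore evaluate everything at a maximizer $\lambda^\star\in[2/\theta,1]$ of the small-stepsize polynomial $p_{\le\theta}(\lambda)=\lambda\prod_{\eta_k\le\theta}(1-\lambda\eta_k)^2$.
\item Since $p_{\le\theta}'(0)=1$, Markov's inequality plus the Chebyshev bound for $q(0)=1$ (\Cref{lem:minimum_norm_polynomial_derivative}) give $\norm{p_{\le\theta}}_{[2/\theta,1]}\ge\frac{1-2/\theta}{2r^2}\xi^{-(r-1)}$. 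Here $\xi=\frac{\sqrt\theta+\sqrt2}{\sqrt\theta-\sqrt2}$ and $r-1\le 2n$.
\item This bound holds wherever the roots $1/\eta_k$ of the small stepsizes fall in $[2/\theta,1]$. That disposes of your ``intermediate regime'' automatically.
\item The Markov factor is the source of the $\theta$-independent $2\log(6n)$. The inequality $\log\xi\le\xi-1=\frac{4}{\sqrt t-2}$ is the source of the $-2$.
\item The large-stepsize factor at $\lambda^\star$ contributes at least $2\sum_{\eta_k>\theta}\log(\frac{2}{\theta}\eta_k-1)\ge 2\int_\theta^\infty\frac{N_n(s)}{s-1}\,\du s\ge N_n(2\theta)$, via \Cref{lem:layer_cake}.
\end{itemize}

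\textbf{How to rescue your averaging framework.} The measure that does give your second estimate is the harmonic measure of $\overline{\mathbb{C}}\setminus[2/\theta,1]$ at the point $0$. This is the image of the arcsine law on $[1,\theta/2]$ under $w\mapsto 1/w$. For it, $\int\log\abs{1-\eta\lambda}\,\du\nu(\lambda)\ge-\log\xi$ for every $\eta>0$, which is an averaged Bernstein--Walsh inequality. However, then $\int\log\lambda\,\du\nu\approx-\log(\theta/8)$. So to get the stated $\theta$-independent $2\log(6n)$, you would still need a separate device for the prefactor $\lambda$, such as the Markov step.
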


When $Q_n(\schedule)=O(\text{poly}(n))$, \Cref{lem:number_bound} simplifies to $\smash{N_n(t)=O(\log n + \frac{n}{\sqrt{t}})}$, which significantly improves upon the trivial upper bound \eqref{eq:trivial_upper_bound_on_number}.
We briefly explain the intuition behind \Cref{lem:number_bound}.
Suppose that a stepsize schedule $\schedule$ has many stepsizes exceeding a threshold $t_0\gg 1$.
These large stepsizes contribute a multiplicative factor $(t_0-\lambda)^{2N_n(t_0)}\approx (t_0)^{2N_n(t_0)}$ in \eqref{eq:quadratic_lower_bound}, which grows exponentially in $N_n(t_0)$.
If the remaining small stepsizes cannot compensate for this increase, then $Q_n(\schedule)$ cannot be small.
Consequently, a small $Q_n(\schedule)$ implies an upper bound on $N_n(t_0)$.
With this intuition in mind, we now prove \Cref{lem:number_bound}.

\begin{proof}	
Fix any $\theta > 4$ and decompose $Q_n(\schedule)$ as
\begin{equation}\label{eq:quadratic_lower_bound_small_poly_large_poly}
	Q_n(\schedule)
    = \max_{\lambda\in[0,1]} p_{\leq\theta} (\lambda) \cdot p_{>\theta}(\lambda)
	\geq \max_{\lambda\in[2/\theta,1]} p_{\leq\theta} (\lambda) \cdot p_{>\theta}(\lambda),
\end{equation}
where $p_{\leq\theta}$ and $p_{>\theta}$ are two real polynomials defined as
\begin{align*}
	p_{\leq\theta} (\lambda) &\coloneqq \lambda \prod_{k\in[n]: \eta_k\leq \theta }(1-\lambda \eta_k)^2,\\
	p_{>\theta} (\lambda) &\coloneqq \prod_{k\in[n]: \eta_k > \theta }(1 - \lambda \eta_k)^2.
\end{align*}
For any $\lambda\in[2/\theta,1]$, since all stepsizes in $p_{>\theta}$ satisfy $\eta_k>\theta$, it follows that $\lambda\eta_k>2$ and $\abs{p_{>\theta}(\lambda)} > 1$.
The central challenge is to identify a $\lambda\in[2/\theta,1]$ so that $\abs{p_{\leq\theta}(\lambda)}$ is sufficiently large.
To this end, let
\[
	\lambda^\star \in \argmax_{\lambda\in[2/\theta,1]} \abs{ p_{\leq\theta}(\lambda) }.
\]

\paragraph{\textbf{\textit{Analysis of $p_{\leq\theta}(\lambda^\star)$}}}
Since $p_{\leq\theta}'(0)=1$, we have
\[
	\abs{ p_{\leq\theta}(\lambda^\star) }
	= \max_{\lambda\in[2/\theta,1]} \abs{ p_{\leq\theta}(\lambda) }
	\geq \min_{p\in \mathscr{P}_r: p'(0)=1} \max_{\lambda\in[2/\theta,1]} \abs{ p(\lambda) },
\]
where $r=\deg p_{\leq\theta} = 2(n-N_n(\theta))+1$ and $\mathscr{P}_r$ denotes the set of real polynomials of degree $r$.
Let $\xi = \frac{\sqrt{\theta} + \sqrt{2}}{\sqrt{\theta} - \sqrt{2}} > 1$.
By applying \Cref{lem:minimum_norm_polynomial_derivative}, we have
\[
	\abs{ p_{\leq\theta}(\lambda^\star) } \geq \frac{1-\frac{2}{\theta}}{2r^2} \xi^{-(r-1)}
	\geq \frac{1}{36n^2} \xi^{-(r-1)}
\]
where the last inequality follows from $\theta > 4$ and $r\leq 3n$.
By taking the logarithm on both sides and applying the inequality $\log\xi\leq \xi-1$, we obtain 
\begin{equation}\label{eq:analysis_of_small_polynomial}
\begin{split}
    \log \abs{ p_{\leq\theta}(\lambda^\star) }
    &\geq -(r-1) \log \xi - 2\log (6n) \\
    &= - 2(n-N_n(\theta))\log \xi - 2\log (6n) \\
    &\geq - 2(n-N_n(\theta)) \frac{2\sqrt{2}}{\sqrt{\theta} - \sqrt{2}} - 2\log (6n) \\
    &\geq -\frac{4\sqrt{2}n}{\sqrt{\theta} - \sqrt{2}} - 2\log (6n).
\end{split}
\end{equation}

\paragraph{\textbf{\textit{Analysis of $p_{>\theta}(\lambda^\star)$}}}
Let $\lambda_0=2/\theta$, so $\lambda^\star\geq\lambda_0$.
For every $\eta_k>\theta$, we have $\lambda^\star \eta_k - 1 \geq \lambda_0\eta_k -1\geq 2-1 = 1$.
By invoking the layer-cake representation (\Cref{lem:layer_cake}), we have
\begin{align*}
	\log \abs{ p_{>\theta}(\lambda^\star) }
	&= 2\sum_{k\in[n]:\eta_k>\theta} \log(\lambda^\star\eta_k-1) \\
	&\geq 2\sum_{k\in[n]:\eta_k>\theta} \log(\lambda_0\eta_k-1) \\
	&= 2\int_{(\theta,\infty)} \log ( \lambda_0\eta - 1 )\, \du\mu_n(\eta) \\
	&= 2\int_0^\infty \mu_n\left( \{ \eta\in(\theta,\infty): \log( \lambda_0\eta - 1 ) > t \} \right) \, \du t \\
	&= 2\int_0^\infty \mu_n\left( \left\{ \eta\in(\theta,\infty): \eta > \frac{1 + \eu^t}{\lambda_0} \right\} \right) \, \du t.
\end{align*}
Since $\frac{1 + \eu^t}{\lambda_0} \geq \frac{2}{\lambda_0} = \theta$ for every $t\geq 0$, the last integrand simplifies to
\[
	\mu_n\left( \left\{ \eta\in(\theta,\infty): \eta > \frac{1 + \eu^t}{\lambda_0} \right\} \right)
	= N_n\left( \frac{1 + \eu^t}{\lambda_0} \right).
\]
Performing a change of variable $s=\frac{1 + \eu^t}{\lambda_0}$ gives
\begin{equation}\label{eq:analysis_of_large_polynomial}
\begin{split}
	\log \abs{ p_{>\theta}(\lambda^\star) }
	&\geq 2\int_0^\infty N_n\left( \frac{1 + \eu^t}{\lambda_0} \right) \, \du t \\
	&= 2\int_{\theta}^\infty \frac{N_n(s)}{s-1/\lambda_0}\,\du s \\
	&\geq 2\int_{\theta}^\infty \frac{N_n(s)}{s-1}\,\du s,
\end{split}
\end{equation}
where the last inequality follows from $\lambda_0=2/\theta<1$.

\paragraph{\textbf{\textit{Final step}}}
By taking the logarithm on both sides of \eqref{eq:quadratic_lower_bound_small_poly_large_poly}, we obtain
\[
	\log Q_n(\schedule)
    \geq \log \abs{ p_{\leq\theta}(\lambda^\star) } + \log \abs{ p_{>\theta}(\lambda^\star) }.
\]
Substituting the estimates from \eqref{eq:analysis_of_small_polynomial} and \eqref{eq:analysis_of_large_polynomial} into the above inequality gives
\[
	\log Q_n(\schedule)
	\geq - \frac{4\sqrt{2}n}{\sqrt{\theta}-\sqrt{2}} - 2\log (6n)
	+ 2\int_{\theta}^\infty \frac{N_n(t)}{t-1}\,\du t.
\]
Rearranging the inequality yields
\[
	\int_{\theta}^\infty \frac{N_n(t)}{t-1}\,\du t
	\leq \frac{ \log Q_n(\schedule) }{2} + \log (6n) + \frac{2\sqrt{2}n}{\sqrt{\theta}-\sqrt{2}}.
\]
Finally, for $\theta > 4$, since the integrand $\frac{N_n(t)}{t-1}$ is decreasing, we have
\[
	\int_{\theta}^\infty \frac{N_n(t)}{t-1}\,\du t
	\geq \int_{\theta}^{2\theta} \frac{N_n(t)}{t-1}\,\du t
	\geq (2\theta-\theta)\cdot\frac{N_n(2\theta)}{2\theta-1}
	\geq \frac{1}{2} N_n(2\theta).
\]
Therefore, we have
\[
	\frac{1}{2}N_n(2\theta)
	\leq \frac{ \log Q_n(\schedule) }{2} + \log (6n) + \frac{2\sqrt{2}n}{\sqrt{\theta}-\sqrt{2}}, \quad\forall \theta>4.
\]
Multiplying both sides by $2$ and performing a change of variable $t=2\theta$ give
\[
    N_n(t) \leq \log Q_n(\schedule) + 2\log(6n) + \frac{8n}{\sqrt{t}-2}, \quad\forall t>8.
\]
This completes the proof.
\end{proof}

\subsection{Upper Bound on Sum of Stepsizes}

By \Cref{lem:number_bound}, we establish the following upper bound on the sum of stepsizes in terms of the maximum stepsize.

\begin{lemma}\label{lem:bound_on_large_step_to_bound_on_sum}
For any $\schedule\in(0,\infty)^{\N}$, let $M_n:=\max_{k\in[n]}\eta_k$ be the maximum stepsize in the first $n$ iterations.
If $Q_n(\schedule)=O(1)$, then
\[
	\eta_{1:n} \leq c\cdot \left( n\sqrt{M_n} + M_n \log n + n \right), \quad\forall n\in\N,
\]
for some universal constant $c>0$.
\end{lemma}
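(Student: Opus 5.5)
The plan is to write the sum of stepsizes through the counting function $N_n$ and then use \Cref{lem:number_bound} for large thresholds. The layer-cake identity gives
\[
\eta_{1:n}=\int_0^\infty N_n(t)\,\du t=\int_0^{M_n}N_n(t)\,\du t ,
\]
since $N_n(t)=0$ for $t\geq M_n$. Fix a threshold $t_0=16$.

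\textbf{Step 1 (small thresholds).} On $[0,\min(t_0,M_n)]$ I use the trivial bound \eqref{eq:trivial_upper_bound_on_number}, which gives a contribution of at most $16n$.

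\textbf{Step 2 (large thresholds).} On $[t_0,M_n]$ (empty if $M_n\le 16$), let $C$ be the constant with $Q_n(\schedule)\le C$ for all $n$. \Cref{lem:number_bound} gives
\[
N_n(t)\le \log C+2\log(6n)+\frac{8n}{\sqrt t-2}.
\]
For $t\ge16$ we have $\sqrt t-2\ge \sqrt t/2$, so $N_n(t)\le \log^+ C+2\log 6+2\log n+16n/\sqrt t$. Integrating over $[16,M_n]$ yields at most
\[
(\log^+C+2\log 6)\,M_n+2M_n\log n+32n\sqrt{M_n}.
\]
Summing the two steps,
\[
\eta_{1:n}\le 16n+32n\sqrt{M_n}+2M_n\log n+(\log^+C+2\log6)M_n .
\]

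\textbf{Step 3 (absorbing the bare $M_n$ term).} This is the only real obstacle, since $\log n$ vanishes at $n=1$.
\begin{itemize}
\item For $n\ge 3$ we have $\log n\ge1$, so the bare $M_n$ term is absorbed into $M_n\log n$ with constant $\log^+C+2\log 6$.
\item For $n\in\{1,2\}$ I will show directly from $Q_n\le C$ that $M_n$ is bounded by a constant depending only on $C$. The target $c(n\sqrt{M_n}+M_n\log n+n)\ge c\,n$ then dominates $\eta_{1:n}\le 2M_n$.
\end{itemize}

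For $n=1$, evaluating at $\lambda=1$ gives $(\eta_1-1)^2\le Q_1\le C$, hence $\eta_1\le 1+\sqrt C$.

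For $n=2$, let $\eta_a=M_2$ and let $\eta_b$ be the other stepsize. Consider the polynomial $\lambda(1-\eta_a\lambda)^2(1-\eta_b\lambda)^2$ on $[1/2,1]$. If $M_2\ge 4$, then $|1-\eta_a\lambda|\ge \eta_a/2-1\ge \eta_a/4$ there, so it suffices to make $\max_{[1/2,1]}|1-\eta_b\lambda|$ large. The function $|1-\eta_b\lambda|$ is convex in $\lambda$, so by a Chebyshev-type bound for degree-one polynomials its maximum over an interval of length $1/2$ is at least $\eta_b/4$. It is also at least $1-\eta_b/2$, from the endpoint $\lambda=1/2$ when $\eta_b\le 1$ and the endpoint $\lambda=1$ when $\eta_b\ge 2$. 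Hence the maximum is at least an absolute constant in all cases. This gives $C\ge Q_2\gtrsim M_2^2$, so $M_2=O(\sqrt C+1)$.

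Alternatively, both small cases can be handled at once: $Q_n$ is continuous in $(\eta_1,\eta_2)$ and tends to $\infty$ as $\max_k\eta_k\to\infty$, so the sublevel set $\{Q_n\le C\}$ has bounded maximum stepsize.

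\textbf{Conclusion.} Combining the cases gives the claim with $c$ depending only on the implied constant in $Q_n(\schedule)=O(1)$. This is the sense in which the constant is \enquote{universal} here: it does not depend on $n$ or on $\schedule$.
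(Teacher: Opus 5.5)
Your proof is correct and follows essentially the same route as the paper: the layer-cake identity $\eta_{1:n}=\int_0^{M_n}N_n(t)\,\du t$, the trivial bound $N_n\le n$ below a fixed threshold (the paper uses $9$; you use $16$ so that $\sqrt{t}-2\ge\sqrt{t}/2$), \Cref{lem:number_bound} above that threshold, and integration using $Q_n(\schedule)\le C$. Your separate treatment of $n=1$ makes explicit a step the paper hides in its final $O(\cdot)$ simplification: there $\log n=0$, so the bare $M_n$ term can only be absorbed after showing $M_1\le 1+\sqrt{C}$. Your remark that $c$ depends on the implied constant $C$ is also correct, and the separate $n=2$ case is harmless though not needed, since $\log 2>0$.
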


Compared with the trivial upper bound $\eta_{1:n}\leq nM_n$, \Cref{lem:bound_on_large_step_to_bound_on_sum} considerably improves the dependence on $M_n$.
Such an improvement is particularly important when $M_n$ is extremely large but such large stepsizes are rare.
For example, in the silver stepsize schedule \cite{altschuler:2024b}, we have $\eta_{1:n} \approx \Theta(n^{1.271})$ and $M_n \approx \Theta(n^{1.271})$.
Lemma~\ref{lem:bound_on_large_step_to_bound_on_sum} implies $\eta_{1:n}= O(n^{1.636})$, whereas the trivial upper bound gives $\eta_{1:n}= O(n^{2.271})$.

\begin{proof}
For any $n\in\N$, by the layer-cake representation (\Cref{lem:layer_cake}),
\begin{align*}
	\eta_{1:n}
	&= \int_{(0,\infty)} \eta \, \du \mu_n(\eta)\\
	&= \int_0^\infty \mu_n\left( \{ \eta\in(0,\infty): \eta > t \} \right)\, \du t \\
	&= \int_0^\infty N_n(t)\,\du t \\
	&= \int_0^{M_n} N_n(t)\,\du t,
\end{align*}
where the last equality follows from $N_n(t)=0$ for all $t\geq M_n$.

Now, if $M_n < 9$, then we have $\eta_{1:n} \leq 9n$, so the lemma holds for any $c>9$.
On the other hand, if $M_n\geq 9$, then
\[
    \eta_{1:n}
    = \int_0^9 N_n(t)\,\du t + \int_9^{M_n} N_n(t)\,\du t
    \leq 9n + \int_9^{M_n} N_n(t)\,\du t.
\]
By \Cref{lem:number_bound}, we have
\[
	N_n(t) = O\left(\log Q_n(\schedule) + \log n + \frac{n}{\sqrt{t}} \right), \quad\forall t\geq 9,\ \forall n\in\N,
\]
where $O$ hides a multiplicative constant independent of $t$ and $n$.
Therefore,
\begin{align*}
    \int_9^{M_n} N_n(t)\,\du t
	&=  \int_9^{M_n} O\left(\log Q_n(\schedule) + \log t + \frac{n}{\sqrt{t}} \right) \,\du t \\
	&= O\left( (M_n-9)\log Q_n(\schedule) + M_n\log n + n\sqrt{M_n} + 1 \right) \\
    &= O\left( M_n + 1 + M_n\log n + n\sqrt{M_n} + 1 \right) \\
	&= O\left( M_n\log n + n\sqrt{M_n} + 1 \right),
\end{align*}
where the third equality follows from $Q_n(\schedule)=O(1)$.
We conclude that
\[
	\eta_{1:n} = O\left( n\sqrt{M_n} + M_n\log n + n \right).
\]
This completes the proof.
\end{proof}

\section{GD on Asymmetric Huber Functions}\label{sec:asymmetric_huber}

In \Cref{sec:quadratic}, we showed that an upper bound on the sum of stepsizes implies a lower bound on the convergence rates (\Cref{lem:convergence_rate_lower_bound_by_sum_of_stepsizes}), and then we upper bounded the sum in terms of the maximum stepsize (\Cref{lem:bound_on_large_step_to_bound_on_sum}).
In this section, we establish upper bounds on large stepsizes.

We introduce the asymmetric Huber functions, which are parameterized by $\varepsilon>0$ and $\delta>0$ and are defined as
\begin{equation}\label{eq:asymmetric_huber}
	f(x) \coloneqq \begin{cases}
		\delta\abs{x} - \frac{ \delta^2 }{2} &\text{if }x\geq\delta, \\
		\frac{1}{2}x^2 &\text{if }-\varepsilon \leq x \leq \delta, \\
		\varepsilon\abs{x} - \frac{ \varepsilon^2 }{2} &\text{if }x\leq-\varepsilon.
	\end{cases}
\end{equation}
When $\varepsilon=\delta$, it recovers the standard Huber function \eqref{eq:huber}.
Since $f$ is quadratic on $[-\varepsilon,\delta]$ and linear outside $[-\varepsilon,\delta]$, it is smooth and convex.

We briefly explain the idea behind our construction;
see \Cref{fig:asymmetric_huber_construction} for an illustration.
Suppose $\schedule\in(0,\infty)^{\N}$ achieves a small $R_n(\schedule)$ and has an extremely large stepsize $\eta_m$ for some $m\in[n]$.
We choose $(\varepsilon,\delta)$ so that GD is in the quadratic region at the $m$-th update and in the linear regions at all other updates.
Since the quadratic region has large curvature, GD ``overshoots'' at the $m$-th update.
In contrast, since the linear regions have zero curvature, GD can only make limited progress at all other updates.
Consequently, if $\eta_m$ is so large that the remaining stepsizes cannot compensate for the overshooting, then GD converges slowly.
In other words, if $R_n(\schedule)$ is small, then $\eta_m$ cannot be arbitrarily large.

\begin{figure}[t]
\centering
\begin{tikzpicture}
\begin{axis}[
  axis lines=center,
  xlabel={$x$},
  ylabel={$y$},
  xlabel style={right},
  ylabel style={above},
  xmin=-3.4, xmax=4.2,
  ymin=-0.5, ymax=5.0,
  xtick={-1, 2, 3.4},
  xticklabels={$-\varepsilon$, $\delta$, $1$},
  ytick=\empty,
  width=9cm,
  height=6cm,
  axis line style={-{Stealth[scale=1.5]}},
  tick style={draw=none},
  clip=false,
]

\addplot[black, thick, domain=-3.3:-1, samples=2] {-x - 0.5};
\addplot[black, thick, domain=-1:2, samples=80] {x*x/2};
\addplot[black, thick, domain=2:3.9, samples=2] {2*x - 2};

\addplot[black, only marks, mark=*, mark size=1.5pt]
  coordinates {(-1, 0.5) (2, 2.0)};

\addplot[dashed, thin] coordinates {(-1, 0) (-1, 0.5)};
\addplot[dashed, thin] coordinates {( 2, 0) ( 2, 2.0)};

\addplot[dashed, thin] coordinates {( 3.4, 0) ( 3.4, 4.8)};

\addplot[red!80!black, only marks, mark=*, mark size=1.5pt]
  coordinates {(3.4, 4.8) (2.7, 3.4) (2, 2) (-2.9, 2.4) (-2.0, 1.5)};

\node[red!80!black,right] at (axis cs: 3.5, 4.8) {$x_1$};
\node[red!80!black,right] at (axis cs: 2.8, 3.4) {$x_2$};
\node[red!80!black,right] at (axis cs: 2.1, 2) {$x_3$};
\node[red!80!black,below] at (axis cs:-3.0, 2.3) {$x_4$};
\node[red!80!black,below] at (axis cs:-2.1, 1.4) {$x_5$};

\draw[-{Stealth[length=5pt]}, red!80!black, thick]
  (axis cs:3.4, 4.8) to[bend right=25] (axis cs:2.7, 3.4);
\draw[-{Stealth[length=5pt]}, red!80!black, thick]
  (axis cs:2.7, 3.4) to[bend right=25] (axis cs:2, 2);
\draw[-{Stealth[length=5pt]}, red!80!black, thick]
  (axis cs:2, 2) to[bend right=30] (axis cs:-2.9, 2.4);
\draw[-{Stealth[length=5pt]}, red!80!black, thick]
  (axis cs:-2.9, 2.4) to[bend left=25] (axis cs:-2.0, 1.5);

\node[] at (-3.2,3.2) {$f(x)$};
\node[red!80!black] at (-0.8, 3.4) {$\eta_m$};

\end{axis}
\end{tikzpicture}

\caption{Visualization of our construction with $m=3$ and $n=4$.}
\label{fig:asymmetric_huber_construction}

\end{figure}

\subsection{Upper Bounds on Magnitude of Large Stepsizes}

The next lemma formalizes the above intuition.

\begin{lemma}\label{lem:asymmetric_huber}
For any $\schedule\in(0,\infty)^{\N}$, any $n\in\N$, and any $m\in[n]$ such that $\eta_m > 1$, we have
\begin{equation}\label{eq:asymmetric_huber_part_i}
\begin{split}
	R_n(\schedule) &\geq \frac{ (\eta_m-1)^2 }{ ( 1 + \eta_{1:m-1} )^2 ( 1 + 2 \eta_{m+1:n} ) }, \\
	G_n(\schedule) &\geq \frac{ (\eta_m-1)^2 }{ ( 1 + 2 \eta_{1:m-1} ) ( 1 + \eta_{m+1:n} )^2 }.
\end{split}
\end{equation}
In particular, we have for any $n\in\N$,
\begin{equation}\label{eq:asymmetric_huber_part_ii}
\begin{split}
	\eta_n &\leq 1 + (1 + \eta_{1:n-1})\sqrt{R_n(\schedule)}, \\
	\eta_n &\leq 1 + \sqrt{1 + 2\eta_{1:n-1}}\sqrt{G_n(\schedule)}.
\end{split}
\end{equation}
\end{lemma}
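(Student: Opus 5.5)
The plan is to prove both bounds with one explicit instance: the asymmetric Huber function \eqref{eq:asymmetric_huber} started at $x_1=1$, with minimizer $x^\star=0$ (which is not $x_1$). The first $m-1$ steps will run on the right linear piece, step $m$ will start exactly at the kink $x=\delta$ and overshoot far to the left, and steps $m+1,\dots,n$ will run on the left linear piece. We tune $\delta$ to land exactly at the kink, and tune $\varepsilon$ separately for $R_n$ and for $G_n$. Since $f'$ is continuous, evaluating at the boundary points $x=\delta$ and $x=-\varepsilon$ causes no ambiguity.

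First we fix $\delta$. While $x_k\geq\delta$ we have $f'(x_k)=\delta$, so $x_k=1-\delta\eta_{1:k-1}$. Taking $\delta=\frac{1}{1+\eta_{1:m-1}}\le 1$ gives $x_m=\delta$. The sequence $x_1,\dots,x_m$ decreases monotonically to $\delta$, so all of these iterates lie in the right linear region. Then $f'(x_m)=\delta$ gives
\[
x_{m+1}=\delta-\eta_m\delta=-(\eta_m-1)\delta<0,
\]
where negativity uses $\eta_m>1$. If every later iterate satisfies $x_k\le-\varepsilon$, then $f'(x_k)=-\varepsilon$ and
\[
x_{n+1}=-(\eta_m-1)\delta+\varepsilon\,\eta_{m+1:n}.
\]
These later iterates increase monotonically, so it suffices to check that $x_{n+1}\le-\varepsilon$.

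For $R_n$ we take $\varepsilon=\frac{(\eta_m-1)\delta}{1+2\eta_{m+1:n}}$. Then $x_{n+1}=-\varepsilon(1+\eta_{m+1:n})\le-\varepsilon$, and
\[
f(x_{n+1})=\varepsilon^2(1+\eta_{m+1:n})-\tfrac{\varepsilon^2}{2}=\tfrac{\varepsilon^2}{2}(1+2\eta_{m+1:n}).
\]
Dividing by $\frac12\norm{x_1-x^\star}^2=\frac12$ gives
\[
\varepsilon^2(1+2\eta_{m+1:n})=\frac{(\eta_m-1)^2}{(1+\eta_{1:m-1})^2(1+2\eta_{m+1:n})},
\]
which is the first bound in \eqref{eq:asymmetric_huber_part_i}.

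For $G_n$ we instead take $\varepsilon=\frac{(\eta_m-1)\delta}{1+\eta_{m+1:n}}$, which puts $x_{n+1}=-\varepsilon$ exactly, so $\frac12 f'(x_{n+1})^2=\frac{\varepsilon^2}{2}$. The denominator is $f(1)-f(0)=\delta-\frac{\delta^2}{2}$. The ratio is therefore
\[
\frac{(\eta_m-1)^2\,\delta}{(2-\delta)(1+\eta_{m+1:n})^2},
\]
and with $\delta=\frac{1}{1+\eta_{1:m-1}}$ we get $\frac{\delta}{2-\delta}=\frac{1}{1+2\eta_{1:m-1}}$, which is the second bound.

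Part \eqref{eq:asymmetric_huber_part_ii} follows by taking $m=n$, so that $\eta_{n+1:n}=0$, and solving for $\eta_n$ when $\eta_n>1$; when $\eta_n\le1$ both inequalities are trivial. The main obstacle is choosing the parameters so that every iterate stays in its intended region. The equalities $x_m=\delta$ and $x_{n+1}\le-\varepsilon$, together with the monotonicity of each phase, are exactly what make this bookkeeping close.
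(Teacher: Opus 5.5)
Your proposal is correct and follows the paper's proof essentially verbatim. It uses the same asymmetric Huber instance with $x_1=1$ and $\delta=\frac{1}{1+\eta_{1:m-1}}$, the same choices $\varepsilon=\frac{(\eta_m-1)\delta}{1+2\eta_{m+1:n}}$ for $R_n$ and $\varepsilon=\frac{(\eta_m-1)\delta}{1+\eta_{m+1:n}}$ for $G_n$, and part \eqref{eq:asymmetric_huber_part_ii} via $m=n$; the only cosmetic difference is that the paper reaches the $R_n$ choice of $\varepsilon$ by completing the square, whereas you substitute it directly and check the region constraint by monotonicity.
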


The assumption $\eta_m>1$ ensures that GD overshoots in the quadratic region.
As expected, the lower bounds in \Cref{lem:asymmetric_huber} grow with $(\eta_m-1)^2$ and decay with all other stepsizes.

\begin{remark}
\Cref{lem:asymmetric_huber} possesses two interesting properties.
The lower bounds for $R_n$ and $G_n$ differ, and they are \emph{not} permutation-invariant:
Permuting the first $n$ stepsizes may change the values of the lower bounds.
This stands in contrast to the lower bounds derived from the convex quadratic functions \eqref{eq:quadratic_lower_bound}.
There, the two lower bounds are identical and permutation-invariant.
\end{remark}

\begin{proof}
We consider the asymmetric Huber function \eqref{eq:asymmetric_huber} with $\delta= \frac{ 1 }{ 1+\eta_{1:m-1} } > 0$ and some
\begin{equation}\label{eq:eps_range}
	0< \varepsilon \leq \frac{  (\eta_m-1)\delta }{ 1 + \eta_{m+1:n} },
\end{equation}
which will be determined later.
Since $\eta_m>1$ and $\delta>0$, the range of $\varepsilon$ is nonempty.
It can be checked that $f$ is $1$-smooth convex and has a unique minimizer at $x^\star=0$.

Now we let $x_1 = 1 > \delta$ be the initial point of GD and compute its trajectory.
By induction, it can be shown that for all $k\in[m]$, we have $x_k\geq\delta$ and $x_k = 1 - \eta_{1:k-1}\delta$.
In particular, $x_m = 1 - \eta_{1:m-1}\delta = \delta$ by the definition of $\delta$.
In the $m$-th step, $x_{m+1} = x_m - \eta_m f'(x_m) = \delta - \eta_m\delta = -(\eta_m-1)\delta$.
Note that
\begin{align*}
	x_{m+1} = -(\eta_m-1)\delta \leq -\frac{  (\eta_m-1)\delta }{ 1 + \eta_{m+1:n} } \leq -\varepsilon,
\end{align*}
where the last step follows from \eqref{eq:eps_range}.
Lastly, by induction, we have $x_k\leq-\varepsilon$ and $x_{k} = x_{m+1} + \eta_{m+1:k-1}\varepsilon$ for every $k=m+1,\ldots,n+1$.

We are now ready to derive lower bounds on $R_n$ and $G_n$.
For $R_n$, by completing the square with respect to $\varepsilon$, we have
\begin{align*}
	f(x_{n+1})
	&= \varepsilon\abs{ x_{n+1} } - \frac{ \varepsilon^2 }{2}  \\
	&= -\varepsilon( x_{m+1} + \eta_{m+1:n}\varepsilon ) - \frac{ \varepsilon^2 }{2} \\
	&= - \frac{1}{2} \left( 1 + 2\eta_{m+1:n} \right) \varepsilon^2 - \varepsilon x_{m+1} \\
	&= - \frac{1}{2} \left( 1 + 2\eta_{m+1:n} \right) \left( \varepsilon + \frac{x_{m+1}}{1+2\eta_{m+1:n}} \right)^2 + \frac{ x_{m+1}^2 }{ 2 ( 1+2\eta_{m+1:n} ) }.
\end{align*}
By choosing $\varepsilon = -\frac{ x_{m+1} }{ 1 + 2\eta_{m+1:n} } = \frac{  (\eta_m-1)\delta }{ 1 + 2\eta_{m+1:n} } \leq \frac{  (\eta_m-1)\delta }{ 1 + \eta_{m+1:n} }$ that maximizes $f(x_{n+1})$, we obtain
\[
	f(x_{n+1}) = \frac{ x_{m+1}^2 }{ 2 ( 1+2\eta_{m+1:n} ) }
	= \frac{ (\eta_m-1)^2 }{ 2 (1 + \eta_{1:m-1})^2( 1+2\eta_{m+1:n} ) }.
\]
Since $\abs{x_1-x^\star}=1$, we conclude that
\[
	R_n(\schedule) \geq \frac{ (\eta_m-1)^2 }{ (1 + \eta_{1:m-1})^2( 1+2\eta_{m+1:n} ) }.
\]
For $G_n$, we have
\[
	G_n(\schedule)
	\geq \frac{ \frac{1}{2}\norm{ \nabla f(x_{n+1}) }^2 }{ f(x_1) - f(x^\star) }
	= \frac{ \varepsilon^2 }{ 2\delta - \delta^2 }.
\]
By choosing $\varepsilon = \frac{  (\eta_m-1)\delta }{ 1 + \eta_{m+1:n} }$ to maximize the lower bound, we obtain
\[
	G_n(\schedule)
	\geq \frac{ (\eta_m-1)^2\delta^2 }{ (2\delta-\delta^2)(1+\eta_{m+1:n})^2 }
	= \frac{ (\eta_m-1)^2 }{ (1 + 2\eta_{1:m-1})(1+\eta_{m+1:n})^2 },
\]
where the last equality follows from the definition of $\delta=\frac{1}{1+\eta_{1:m-1}}$.
This proves \eqref{eq:asymmetric_huber_part_i}.

It remains to prove \eqref{eq:asymmetric_huber_part_ii}.
If $\eta_n\geq 1$, then \eqref{eq:asymmetric_huber_part_ii} follows by taking $m=n$ in \eqref{eq:asymmetric_huber_part_i} and rearranging the inequalities.
If $\eta_n\leq 1$, then \eqref{eq:asymmetric_huber_part_ii} holds trivially.
This completes the proof.
\end{proof}

\section{Proofs of Main Results}\label{sec:lower_bounds}

This section presents the proofs of \Cref{thm:function_value_impossibility,thm:gradient_norm_impossibility}.
We restate the main theorems here for convenience.

\begin{theorem}\label{thm:function_value_impossibility_restate}
No stepsize schedule $\schedule\in(0,\infty)^{\N}$ satisfies $R_n(\schedule)=o(n^{-4/3})$.
\end{theorem}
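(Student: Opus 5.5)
We argue by contradiction: suppose $R_n(\schedule)=o(n^{-4/3})$. Since $R_n\ge Q_n$, also $Q_n(\schedule)=o(n^{-4/3})$, so in particular $Q_n(\schedule)=O(1)$ and \Cref{lem:bound_on_large_step_to_bound_on_sum} applies. The plan is to derive a growth bound $S_n\coloneqq\eta_{1:n}=O(n^{4/3})$ (in fact we will need $S_n = o(n^{4/3})$). Combined with \Cref{lem:convergence_rate_lower_bound_by_sum_of_stepsizes}(i), this gives $R_n\ge Q_n\ge \frac{1}{4(1+2S_n)}=\omega(n^{-4/3})$ along all $n$, which is a contradiction.

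To control $S_n$, we will use the magnitude bound \eqref{eq:asymmetric_huber_part_ii}: $\eta_k\le 1+(1+S_{k-1})\sqrt{R_k}$. Writing $\sqrt{R_k}\le \varepsilon_k k^{-2/3}$ with $\varepsilon_k\to0$, this gives $\eta_k\le 1+\varepsilon_k k^{-2/3}(1+S_{k-1})$. Hence $M_n=\max_{k\le n}\eta_k\le 1+\bar\varepsilon_n n^{-2/3}\cdots$. Since $S_{k-1}$ is nondecreasing but $k^{-2/3}$ is decreasing, we take the crude bound $M_n\le 1+\max_{k\le n}\varepsilon_k k^{-2/3}(1+S_{k-1})$. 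Now substitute into \Cref{lem:bound_on_large_step_to_bound_on_sum}:
\[
S_n\le c\bigl(n\sqrt{M_n}+M_n\log n+n\bigr).
\]
Heuristically, with the ansatz $S_n\approx A n^{\alpha}$, we get $M_n\approx \varepsilon A n^{\alpha-2/3}$, so $n\sqrt{M_n}\approx \sqrt{\varepsilon A}\,n^{1+\alpha/2-1/3}$. Self-consistency gives $\alpha=2/3+\alpha/2$, i.e.\ $\alpha=4/3$, with constant $A\lesssim c\sqrt{\varepsilon A}$, i.e.\ $A\lesssim c^2\varepsilon\to0$. This yields $S_n=o(n^{4/3})$. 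The term $M_n\log n\approx \varepsilon A n^{2/3}\log n$ is lower order.

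The rigorous version is a bootstrap. Define $a_n\coloneqq\max_{k\le n}S_k/k^{4/3}$; this is finite for each $n$. Then $M_n\le 1+\sup_{k\le n}\varepsilon_k(1+a_n k^{4/3})k^{-2/3}\le 1+\bar\varepsilon\,(1+a_n n^{2/3})$, where $\bar\varepsilon$ is the supremum of $\varepsilon_k$ for $k$ beyond some $K$; the early indices $k\le K$ contribute only a constant. Plugging in gives
\[
S_n/n^{4/3}\le C\bigl(n^{-1/3}+\sqrt{\bar\varepsilon a_n}+\bar\varepsilon a_n n^{-2/3}\log n+o(1)\bigr).
\]
Taking the maximum over $n$ yields $a_n\le C'+C\sqrt{\bar\varepsilon a_n}+\dots$, so $a_n$ is bounded. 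A second pass, now using $\varepsilon_k\to0$ on the tail and splitting at $K$, gives $\limsup S_n/n^{4/3}\le C\sqrt{\varepsilon_K\sup a}+$ (tail of the early-index contribution, which vanishes relative to $n^{4/3}$). Letting $K\to\infty$ shows $S_n=o(n^{4/3})$. Then $R_n\ge 1/(4(1+2S_n))$ is not $o(n^{-4/3})$, which is the desired contradiction.

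The main obstacle is this bootstrap step. Two features need care: the maximum over $k$ in $M_n$ mixes early indices, where $\varepsilon_k$ is not small, with late ones; and the constants in \Cref{lem:bound_on_large_step_to_bound_on_sum} are only guaranteed under $Q_n=O(1)$, which does hold here. The first thing I would try is to separate indices $k\le K$, where $\eta_k$ is bounded by a fixed constant, from $k>K$, where $\varepsilon_k\le\varepsilon$. The quadratic inequality $a\le C'+C\sqrt{\varepsilon a}$ then gives $a\le 2C'$ for small $\varepsilon$, and then $o(1)$ after the refinement.
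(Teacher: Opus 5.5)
Your bootstrap is correct, but it closes the argument differently from the paper.

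The paper never bounds $M_n$ at a general index. It restricts to the record set $\mathcal{N}=\{n:\eta_n=\max_{k\le n}\eta_k\}$, which is infinite by \Cref{lem:convergence_rate_lower_bound_by_sum_of_stepsizes}~(ii). On $\mathcal{N}$ the magnitude bound \eqref{eq:asymmetric_huber_part_ii} controls $M_n=\eta_n$ directly by $1+(1+\eta_{1:n-1})\sqrt{R_n(\schedule)}$. Substituting this into \Cref{lem:bound_on_large_step_to_bound_on_sum}, dividing by $\eta_{1:n}$, and using $\eta_{1:n}=\Omega(R_n(\schedule)^{-1})$ gives $nR_n^{3/4}+\sqrt{R_n}\log n+nR_n=\Omega(1)$ along $\mathcal{N}$. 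That contradicts $R_n=o(n^{-4/3})$ with no induction at all.

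You instead bound $M_n$ at every $n$ by maximizing the magnitude bound over all $k\le n$. Since the maximizing $k$ is unknown, this forces you to introduce $a_n=\max_{k\le n}S_k/k^{4/3}$ and a self-improving inequality of the form $a_n\le C_K+c\sqrt{\bar\varepsilon a_n}+c'\bar\varepsilon a_n$, with $c'=c\sup_m m^{-2/3}\log m$. Note that it is the linear term, not the square-root term, that actually requires $\bar\varepsilon=\sup_{k>K}\varepsilon_k$ to be small; the inequality $a\le C+c\sqrt{\bar\varepsilon a}$ alone bounds $a$ for any $\bar\varepsilon$.

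In exchange for the extra bookkeeping, your route gives $S_n=O(n^{4/3})$ at every index rather than only along records. It also does not need \Cref{lem:convergence_rate_lower_bound_by_sum_of_stepsizes}~(ii), because the bounded-stepsize case is absorbed into the constant $M_K$. The paper's record-subsequence device is shorter.

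Your second ``refinement'' pass is unnecessary, contrary to your remark that $S_n=o(n^{4/3})$ is needed. Once $S_n\le An^{4/3}$ for all $n$, \Cref{lem:convergence_rate_lower_bound_by_sum_of_stepsizes}~(i) gives $n^{4/3}R_n(\schedule)\ge n^{4/3}/(4(1+2An^{4/3}))\to 1/(8A)>0$. That already contradicts $R_n(\schedule)=o(n^{-4/3})$.
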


\begin{theorem}\label{thm:gradient_norm_impossibility_restate}
No stepsize schedule $\schedule\in(0,\infty)^{\N}$ satisfies $G_n(\schedule)=o(n^{-1})$.
\end{theorem}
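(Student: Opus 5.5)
The plan is to argue by contradiction, using only \Cref{lem:convergence_rate_lower_bound_by_sum_of_stepsizes} (i) and the last-step bound \eqref{eq:asymmetric_huber_part_ii} of \Cref{lem:asymmetric_huber}; \Cref{lem:number_bound} should not be needed here. The idea is that (i) forces the partial sums to be large, while \eqref{eq:asymmetric_huber_part_ii} prevents them from growing fast, and the two requirements are incompatible.

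\textbf{Setup.} Suppose $G_n(\schedule)=o(n^{-1})$. Fix a small $\epsilon>0$ and choose $n_0$ with $G_n(\schedule)\le \epsilon/n$ for all $n\ge n_0$. Put $S_n\coloneqq 1+2\eta_{1:n}$, with $S_0=1$.

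\textbf{Step 1 (lower bound on $S_n$).} Since $G_n(\schedule)\ge Q_n(\schedule)$, \Cref{lem:convergence_rate_lower_bound_by_sum_of_stepsizes} (i) gives
\[
\epsilon/n \ge \frac{1}{4S_n}, \qquad\text{so}\qquad \sqrt{S_n}\ge \tfrac12\sqrt{n/\epsilon}\quad\text{for } n\ge n_0 .
\]

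\textbf{Step 2 (upper bound on the increments).} The second inequality of \eqref{eq:asymmetric_huber_part_ii} reads $\eta_n\le 1+\sqrt{S_{n-1}}\sqrt{G_n(\schedule)}$. Hence
\[
S_n = S_{n-1}+2\eta_n \le S_{n-1}+2+2\sqrt{S_{n-1}G_n(\schedule)} \le \Bigl(\sqrt{S_{n-1}}+\sqrt{G_n(\schedule)}+1/\sqrt{S_{n-1}}\Bigr)^2 .
\]
The last inequality follows by expanding the square: its cross terms already contain $2+2\sqrt{S_{n-1}G_n}$, and all other terms are nonnegative. Taking square roots,
\[
\sqrt{S_n}-\sqrt{S_{n-1}}\le \sqrt{G_n(\schedule)}+1/\sqrt{S_{n-1}} .
\]

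\textbf{Step 3 (telescoping).} Sum the increment bound from $n_0+1$ to $N$. Bound $\sqrt{G_k}\le\sqrt{\epsilon/k}$, and use Step 1 to get $1/\sqrt{S_{k-1}}\le 2\sqrt{\epsilon/(k-1)}$. Since $\sum_{k\le N}k^{-1/2}\le 2\sqrt N$, this yields
\[
\sqrt{S_N}\le C_{n_0}+c_0\sqrt{\epsilon N},
\]
where $c_0$ is an absolute constant (about $6$) and $C_{n_0}$ does not depend on $N$.

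\textbf{Step 4 (contradiction).} Combining Steps 1 and 3 gives
\[
\tfrac12\sqrt{N/\epsilon}\le C_{n_0}+c_0\sqrt{\epsilon N}\qquad\text{for all } N\ge n_0 .
\]
Choose $\epsilon$ small enough that $\tfrac12\epsilon^{-1/2}>c_0\epsilon^{1/2}$, e.g.\ $\epsilon<1/(2c_0)$. Then letting $N\to\infty$ gives a contradiction.

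\textbf{Main obstacle.} The only delicate point is Step 2: turning the additive recursion $S_n\le S_{n-1}+2+2\sqrt{S_{n-1}G_n}$ into a clean increment bound for $\sqrt{S_n}$. The constant $+2$ coming from the ``$1+$'' in \eqref{eq:asymmetric_huber_part_ii} must contribute only a term of order $1/\sqrt{S_{n-1}}$. That term is summable at the right rate precisely because Step 1 already forces $S_n\gtrsim n/\epsilon$. The case $\eta_n\le 1$ needs no separate treatment, since \eqref{eq:asymmetric_huber_part_ii} holds for every $n$.
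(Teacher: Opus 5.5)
Your proof is correct and follows essentially the same route as the paper: both use \Cref{lem:convergence_rate_lower_bound_by_sum_of_stepsizes}~(i) to force $\eta_{1:n}=\omega(n)$, and both use the last-step bound $\eta_k\le 1+\sqrt{1+2\eta_{1:k-1}}\sqrt{G_k(\schedule)}$ from \Cref{lem:asymmetric_huber} to limit how fast $\eta_{1:n}$ can grow. The only difference is bookkeeping: the paper sums this bound directly, uses $\sqrt{1+2\eta_{1:k-1}}\le\sqrt{1+2\eta_{1:n}}$, and absorbs the additive $n$ via $\eta_{1:n}=\omega(n)$, whereas you telescope $\sqrt{S_n}$ and control the $+1$ terms through $1/\sqrt{S_{k-1}}$ using Step~1 at every index, which yields explicit constants at the cost of slightly more work.
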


The stronger lower bound for $G_n$ compared to $R_n$ stems from the differing bounds in \Cref{lem:asymmetric_huber}.
More specifically, by setting $m=n$ in \Cref{lem:asymmetric_huber}, we obtain
\[
	R_n(\schedule) \geq \frac{ (\eta_n-1)^2 }{ ( 1 + \eta_{1:n-1} )^2}
    = \frac{ (\eta_n-1)^2 }{ 1 + 2\eta_{1:n-1} + \eta_{1:n-1}^2},
    \quad
	G_n(\schedule) \geq \frac{ (\eta_n-1)^2 }{ 1 + 2 \eta_{1:n-1} }.
\]
For the same stepsize schedule $\schedule$, since $\eta_{1:n-1}>0$, the lower bound on $G_n$ is always larger than that on $R_n$.

\subsection{Proof of Theorem~\ref{thm:function_value_impossibility_restate}}

Suppose for contradiction that there exists $\schedule\in(0,\infty)^{\N}$ such that GD achieves an $o(n^{-4/3})$ anytime convergence rate for $R_n$, i.e., $R_n(\schedule) = o( n^{-4/3} )$.
Consider the set of all indices at which the maximum stepsize occurs:
\[
	\mathcal{N} = \left\{ n\in\N: \eta_n = \max_{k\in[n]}\eta_k \right\}.
\]
Since $Q_n(\schedule)\leq R_n(\schedule) = o(n^{-4/3})$, by \Cref{lem:convergence_rate_lower_bound_by_sum_of_stepsizes} (ii), we have $\limsup_{n\to\infty}\eta_n = \infty$, implying $\abs{\mathcal{N}}=\infty$.
In the following, we write $a_n=O_{\mathcal{N}}(b_n)$ and $b_n=\Omega_{\mathcal{N}}(a_n)$ if there exists a constant $C>0$ such that $a_n\leq C\cdot b_n$ for all $n\in\mathcal{N}$.

By applying \Cref{lem:bound_on_large_step_to_bound_on_sum} to $n\in\mathcal{N}$, we have
\[
	\eta_{1:n} = O_{\mathcal{N}}\left( n\sqrt{\eta_n} + \eta_n \log n + n \right),
\]
and by \Cref{lem:asymmetric_huber}, we have $\eta_n = O\left( 1 + \eta_{1:n}\sqrt{R_n(\schedule)} \right)$.
Combining them yields
\[
	\eta_{1:n} = O_{\mathcal{N}}\left( n\sqrt{ \eta_{1:n} }R_n(\schedule)^{1/4} + \eta_{1:n}\sqrt{R_n(\schedule)}\log n + n \right).
\]
Dividing both sides by $\eta_{1:n}$, we obtain
\[
	\frac{ nR_n(\schedule)^{1/4} }{ \sqrt{\eta_{1:n}} }  + \sqrt{R_n(\schedule)}\log n + \frac{n}{\eta_{1:n}} = \Omega_{\mathcal{N}}(1).
\]
By \Cref{lem:convergence_rate_lower_bound_by_sum_of_stepsizes} (i) and $R_n(\schedule)\geq Q_n(\schedule)$, we have $\eta_{1:n} = \Omega(R_n(\schedule)^{-1})$, which implies
\[
	n R_n(\schedule)^{3/4} + \sqrt{R_n(\schedule)}\log n + nR_n(\schedule) = \Omega_{\mathcal{N}}(1).
\]
However, by substituting the assumption $R_n(\schedule)=o(n^{-4/3})$ into the left hand side of the above equation, we obtain
\[
	n R_n(\schedule)^{3/4} + \sqrt{R_n(\schedule)}\log n + nR_n(\schedule) = o\left(1 + n^{-2/3}\log n + n^{-1/3}\right)
	= o(1).
\]
Since the intersection of $o(1)$ and $\Omega_{\mathcal{N}}(1)$ is empty, this leads to a contradiction and proves the theorem.

\subsection{Proof of Theorem~\ref{thm:gradient_norm_impossibility_restate}}

For the sake of contradiction, suppose that there exists $\schedule\in(0,\infty)^{\N}$ such that $G_n(\schedule) = o(n^{-1})$.
It follows from \Cref{lem:convergence_rate_lower_bound_by_sum_of_stepsizes} (i) that $\eta_{1:n}=\omega(n)$.
Applying \Cref{lem:asymmetric_huber}, we obtain
\[
	\eta_k \leq 1 + \sqrt{1+2\eta_{1:k-1}}\sqrt{G_k(\schedule)},\quad\forall k\in\N.
\]
Summing from $k=1$ to $n$ yields
\[
	\eta_{1:n} \leq n + \sum_{k=1}^n \sqrt{1+2\eta_{1:k-1}}\sqrt{G_k(\schedule)}
	\leq n + \sqrt{1+2\eta_{1:n}} \sum_{k=1}^n \sqrt{G_k(\schedule)}.
\]
Rearranging the above inequality gives
\[
	\sum_{k=1}^n \sqrt{G_k(\schedule)} \geq \frac{ \eta_{1:n} - n}{ \sqrt{1+2\eta_{1:n}} }
	= \sqrt{\eta_{1:n}}\cdot\frac{1- \frac{n}{\eta_{1:n}}}{\sqrt{\frac{1}{\eta_{1:n}} + 2}}
	= \omega(\sqrt{n}),
\]
where the final equality follows from $\eta_{1:n}=\omega(n)$.
However, since $\sqrt{G_n(\schedule)} = o(n^{-1/2})$, we have $\sum_{k=1}^n \sqrt{G_k(\schedule)} = O(\sqrt{n})$.
This leads to the relationship $O(\sqrt{n}) \geq \omega(\sqrt{n})$, which is a contradiction and completes the proof.

\section{Discussion}

\subsection{Necessary Conditions for Fast Stepsize Schedules}

Our lower bounds are not merely negative results.
They also provide necessary conditions for fast-converging stepsize schedules, which we hope will inspire the design of better ones.
For instance, suppose that there exists a hypothetical stepsize schedule $\schedule\in(0,\infty)^{\N}$ that attains an $O(n^{-4/3})$ anytime convergence rate for $R_n$.
What must $\schedule$ look like?

Let $\mathcal{N} = \smash{\left\{ n\in\N: \eta_n = \max_{k\in[n]}\eta_k \right\}}$ be the set of indices at which maximum stepsize is attained.
For every $n\in\mathcal{N}$, we have $\eta_{1:n}=\Omega(n^{4/3})$ by \Cref{lem:convergence_rate_lower_bound_by_sum_of_stepsizes}.
Combining this with \Cref{lem:bound_on_large_step_to_bound_on_sum}, which asserts $\eta_{1:n} = O(n\sqrt{\eta_n} + \eta_n\log n)$, we obtain $\eta_n=\Omega(n^{2/3})$.
On the other hand, by \Cref{lem:bound_on_large_step_to_bound_on_sum} and \Cref{lem:asymmetric_huber}, we have
\begin{align*}
    \eta_n
    &= O\left(\eta_{1:n}\sqrt{R_n(\schedule)} \right) \\
    &= O\left(n\sqrt{\eta_n R_n(\schedule)} + \eta_n\log n \sqrt{R_n(\schedule)}\right) \\
    &= O\left(n^{1/3}\sqrt{\eta_n} + \frac{\eta_n\log n}{n^{2/3}}\right),
\end{align*}
which implies $\eta_n = O(n^{2/3})$.
It then follows from \Cref{lem:bound_on_large_step_to_bound_on_sum} that $\eta_{1:n} = O(n^{4/3})$ for $n\in\mathcal{N}$.
We therefore conclude that $\eta_n = \Theta(n^{2/3})$ and $\eta_{1:n} = \Theta(n^{4/3})$ for all $n\in\mathcal{N}$.

\subsection{Tightness of Lemma~\ref{lem:number_bound}}\label{sec:tightness_of_number_bound}

The upper bound on the number of large stepsizes (\Cref{lem:number_bound}) is one of the key lemmas in our proofs.
One may wonder whether this lemma can be improved without additional assumptions, as such an improvement would in turn improve \Cref{thm:function_value_impossibility}.
Unfortunately, the answer is likely no.
For each $n\in\N$, there exist stepsize schedules whose number of large stepsizes $N_n(t)$ matches the upper bound in \Cref{lem:number_bound} up to logarithmic terms, implying that the lemma is almost tight.
In the following, we describe the construction of such stepsize schedules.

First, note that $Q_n(\schedule) = \norm{ q_{\schedule} }_{[0,1]}$ only depends on the first $n$ stepsizes and can be written as the infinity norm of the polynomial $q_{\schedule}(\lambda) = \lambda\prod_{k=1}^n(1-\eta_k\lambda)^2$.
It is known that for any stepsize schedule $\schedule\in(0,\infty)^{\N}$ satisfying
\begin{equation}\label{eq:chebyshev_stepsizes}
    \{ \eta_k: k\in[n] \} = \{ \gamma_{n,k}: k\in[n] \},
    \quad
    \gamma_{n,k} \coloneqq \left(\sin\left(\frac{n-k+1}{2n+1}\pi\right) \right)^{\smash{-2}},
\end{equation}
we have $Q_n(\schedule) = (2n+1)^{-2}$ \cite{nemirovsky:1992}.
The set $\{ \gamma_{n,k}: k\in[n] \}$ contains the reciprocals of the roots of the polynomial $x^{-1/2}T_{2n+1}(\sqrt{x})$, where $T_n$ is the $n$-th Chebyshev polynomial of the first kind (see \Cref{def:chebyshev_polynomial}).
Since permuting the first $n$ stepsizes does not affect the value of $Q_n(\schedule)$, the condition \eqref{eq:chebyshev_stepsizes} only requires that the two sets are equal, regardless of the order of the stepsizes.

For $\schedule$ satisfying \eqref{eq:chebyshev_stepsizes}, we have
\begin{align*}
    N_n(t)
    &= \left\lvert \left\{ k\in[n]:  \sin\left(\frac{n-k+1}{2n+1}\pi\right) < \frac{1}{\sqrt{t}} \right\} \right\rvert \\
    &= \left\lvert \left\{ k\in[n]: k > n + 1 - \frac{2n+1}{\pi}\arcsin\left( \frac{1}{\sqrt{t}} \right) \right\} \right\rvert \\
    &\geq n - \left( n + 1 - \frac{2n+1}{\pi}\arcsin\left( \frac{1}{\sqrt{t}} \right) \right)\\
    &= \frac{2n+1}{\pi}\arcsin\left( \frac{1}{\sqrt{t}} \right) - 1.
\end{align*}
Since $\arcsin x \geq x$ for all $x\in[0,1]$, we conclude that
\[
    N_n(t) \geq \frac{2n+1}{\pi\sqrt{t}} - 1 = \Omega\left( \frac{n}{\sqrt{t}} \right), \quad\forall t> 0.
\]
This lower bound matches the upper bound $N_n(t)=O(\log n + \smash{\frac{n}{\sqrt{t}}})$ in \Cref{lem:number_bound} up to an $O(\log n)$ additive term and constant multiplicative factors.
In particular, when $t=O\left(\smash{\frac{n^2}{\log^2 n}}\right)$, the $O(\log n)$ term is dominated by the $O(\frac{n}{\sqrt{t}})$ term, so \Cref{lem:number_bound} is tight up to constant factors.

\subsection{Non-Anytime Convergence Rates}\label{sec:non_anytime_rate}

In view of \Cref{thm:function_value_impossibility,thm:gradient_norm_impossibility}, it is natural to ask whether our techniques also suffice to establish an $\omega(n^{-2})$ lower bound on the non-anytime convergence rates.
In this subsection, we present numerical results suggesting that the answer is likely negative.
Recall that for the non-anytime convergence rates, the stepsize schedule is allowed to depend on the prescribed stopping time $n$.

The proofs of \Cref{thm:function_value_impossibility,thm:gradient_norm_impossibility} rely on two families of functions:
the quadratic functions in \Cref{sec:quadratic} and the asymmetric Huber functions in \Cref{sec:asymmetric_huber}.
We have seen in \Cref{sec:tightness_of_number_bound} that if $\schedule$ satisfies the condition \eqref{eq:chebyshev_stepsizes}, then $Q_n(\schedule) = O(n^{-2})$.
Therefore, the quadratic functions alone are not enough to establish an $\omega(n^{-2})$ lower bound on the non-anytime convergence rates.

Combining $Q_n(\schedule)$ with \Cref{lem:asymmetric_huber} is still insufficient.
The two lower bounds in \Cref{lem:asymmetric_huber} are
\[
	\frac{ (\eta_m-1)^2 }{ ( 1 + \eta_{1:m-1} )^2 ( 1 + 2 \eta_{m+1:n} ) },\quad \frac{ (\eta_m-1)^2 }{ ( 1 + 2 \eta_{1:m-1} ) ( 1 + \eta_{m+1:n} )^2 }.
\]
These are small when, for every $m$ with $\eta_m>1$, the partial sums $\eta_{1:m-1}$ and $\eta_{m+1:n}$ are large.
Based on this observation and the fact that $\gamma_{n,k}$ is increasing in $k$, we construct a permutation of $\{ \gamma_{n,k} : k\in[n] \}$ by placing the largest stepsize $\gamma_{n,n}$ in the middle and then distributing the remaining values alternately to the left and right of $\gamma_{n,n}$.
This leads to the following stepsize schedule:
For every $n\in\N$, let $\schedule^{(n)}\in(0,\infty)^{\N}$ be a stepsize schedule whose first $n$ terms are
\begin{equation}\label{eq:permutation}
	\begin{cases}
		\left(\gamma_{n,1}, \gamma_{n,3}, \cdots, \gamma_{n,n-1}, \gamma_{n,n}, \gamma_{n,n-2}, \ldots, \gamma_{n,4}, \gamma_{n,2} \right) &\text{for odd }n, \\
		\left(\gamma_{n,2}, \gamma_{n,4}, \cdots, \gamma_{n,n-1}, \gamma_{n,n}, \gamma_{n,n-2}, \ldots, \gamma_{n,3}, \gamma_{n,1} \right) &\text{for even }n.
	\end{cases}
\end{equation}
and whose remaining terms are $0$.
Since $\schedule^{(n)}$ satisfies the condition \eqref{eq:chebyshev_stepsizes}, we have  $Q_n(\schedule^{(n)}) = O(n^{-2})$.
We numerically evaluate $Q_n$ and the lower bounds in \Cref{lem:asymmetric_huber} on $\schedule^{(n)}$ and present the results in \Cref{fig:chebyshev_lower_bounds}.
Notably, all lower bounds decay roughly at an $O(n^{-2})$ rate.
This suggests that our analysis cannot establish an $\omega(n^{-2})$ lower bound on the non-anytime convergence rates.
Moreover, deriving such a lower bound remains challenging, as any successful approach must explain why \emph{all} permutations of $\{ \gamma_{n,k} : k\in[n] \}$ fail to achieve fast convergence.

\begin{figure}[t]
        \centering
        \includegraphics[scale=0.55]{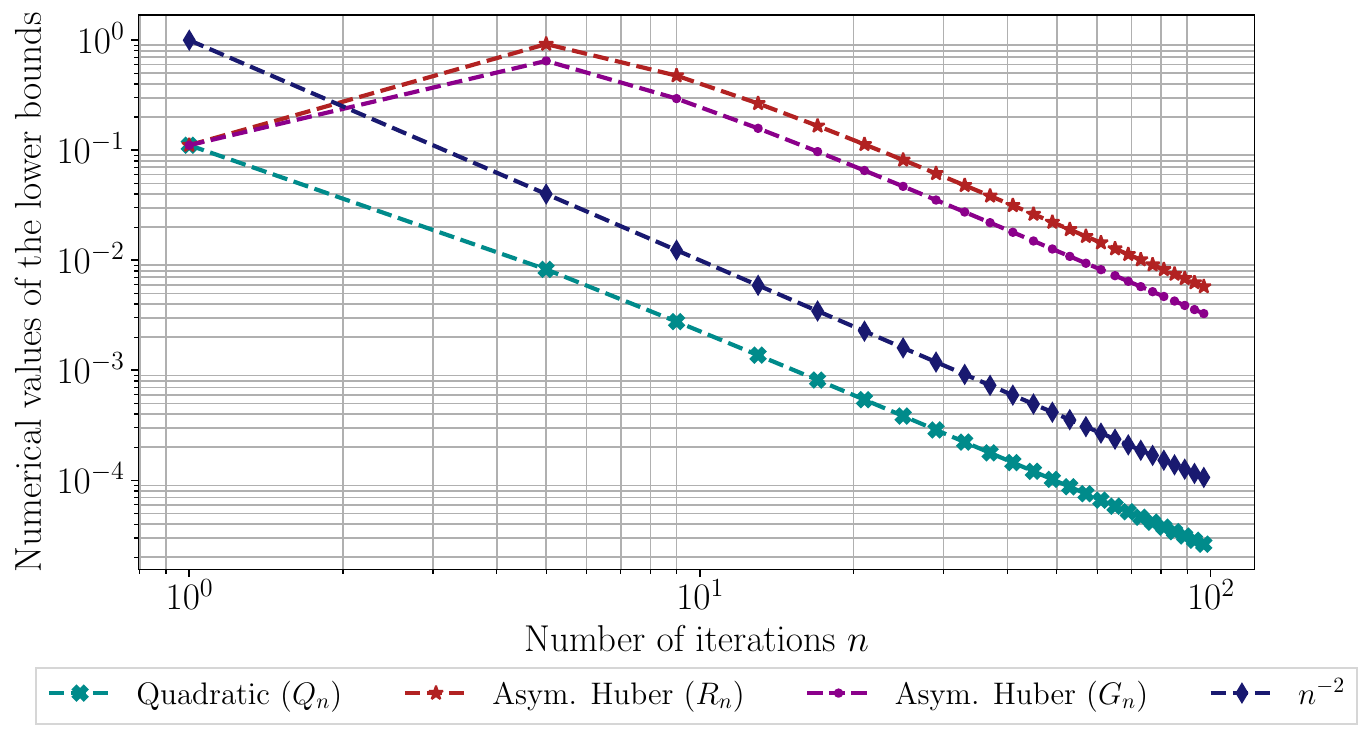}
        \caption{Numerical values of the lower bounds evaluated on the stepsize schedules \eqref{eq:permutation}.
        For the quadratic lower bound, we evaluate the polynomial in $Q_n(\schedule^{(n)})$ at 50 uniform points on $[0,1]$ and report the largest value.
        For the asymmetric Huber lower bound (\Cref{lem:asymmetric_huber}), the largest lower bound over all valid choices of $m\in[n]$ is reported.}
        \label{fig:chebyshev_lower_bounds}
\end{figure}

\subsection{Beyond Last Iterate and Positive Stepsizes}

Our results apply specifically to the last iterate of GD with an arbitrary non-adaptive, positive stepsize schedule.
Proving lower bounds for averaged iterates, negative stepsizes, or adaptive stepsize rules such as AdaGrad \cite{duchi:2011}, is beyond the scope of this work.
While such extensions may be feasible, their technical difficulty should not be underestimated.
For example, Luner and Grimmer~\cite{luner:2025} studied the benefits of averaging and extrapolation in GD, while Shugart and Altschuler~\cite{shugart:2025} demonstrated that incorporating negative stepsizes can improve the convergence of gradient descent-ascent in convex-concave problems.
We leave these extensions to future work.

\section{Conclusions}

In this work, we complement recent advances in the acceleration of GD by proving two lower bounds.
Our results indicate that no positive stepsize schedule can achieve $o(n^{-4/3})$ anytime acceleration for $R_n$, nor $o(n^{-1})$ anytime acceleration for $G_n$.
The key technical contributions underlying our proofs are novel upper bounds on the number and the magnitude of large stepsizes, derived from analyzing GD on quadratic and asymmetric Huber functions, respectively.
These results provide necessary conditions for fast-converging stepsize schedules, and we hope they will inspire the design of new ones.

Several gaps between the best known lower and upper bounds remain.
In particular, finding the optimal \textit{anytime} convergence rate of GD under any positive stepsize schedule in terms of the function value gap, $R_n$, remains open.
Our results together with the upper bound of Zhang et al.~\cite{zhang:2025} indicate that the answer lies between $n^{-1.334}$ and $n^{-1.119}.$
Another important question is deriving an $\omega(n^{-2})$ lower bound for the \textit{non-anytime} convergence rate of GD.
As noted in \Cref{sec:non_anytime_rate}, our technique appears insufficient to establish such a result, and doing so will likely require new constructions and analysis. We leave the investigation of these questions for future work.

\bibliographystyle{unsrtnat}
\bibliography{./ref}

\newpage
\appendix

\section{Best Known Convergence Rates of GD and First-Order Methods}\label{sec:sota_results}

\aboverulesep=0ex
\belowrulesep=0ex

\begin{table}[h]
\centering
\caption{Summary of the best known convergence rates of GD with an arbitrary positive stepsize schedule in smooth convex optimization.
Only for the anytime convergence rate of $G_n$ do the upper and lower bounds coincide.
}
\renewcommand\arraystretch{2}
\begin{tabular}{@{} c|c|c|c }
\toprule
Setting & Anytime & Upper bound & Lower bound \\
\midrule
\multirow{2}{*}{$R_n$} & \xmark & \makecell{\\[-8pt]$n^{-1.271}$\\
\scriptsize (Altschuler and Parrilo \cite{altschuler:2024b},\\
\scriptsize Grimmer et al.~\cite{grimmer:2025,grimmer:2025a}, \\
\scriptsize Zhang et al.~\cite{zhang:2024})} & \makecell{$n^{-2}$ \\\scriptsize(Nemirovsky and Yudin \cite{nemirovsky:1983})} \\ \cline{2-4}
                                & \cmark & \makecell{\\[-8pt]$n^{-1.119}$\\\scriptsize(Zhang et al. \cite{zhang:2025})} & \makecell{\\[-8pt]$n^{-1.334}$\\\scriptsize(\Cref{thm:function_value_impossibility})} \\ \cline{1-4}
\multirow{2}{*}{$G_n$} & \xmark & \makecell{\rule{0pt}{1.3em}$n^{-1.271}$ \\ \scriptsize (Grimmer et al.~\cite{grimmer:2025,grimmer:2025a}, \\
\scriptsize Zhang et al.~\cite{zhang:2024})} & \makecell{$n^{-2}$\\\scriptsize(Nemirovsky and Yudin \cite{nemirovsky:1983})} \\ \cline{2-4}
                                & \cmark & \makecell{\\[-8pt]$n^{-1}$\\
                                \scriptsize (Taylor et al. \cite{taylor:2018})} & \makecell{\\[-8pt]$n^{-1}$\\\scriptsize(\Cref{thm:gradient_norm_impossibility})} \\
\bottomrule
\end{tabular}
\label{tab:sota_results}
\end{table}

\begin{table}[h!]
\centering
\caption{Summary of the best known convergence rates of first-order methods in smooth convex optimization.
Upper and lower bounds coincide in all settings except for the anytime convergence rate of $G_n$.
}
\def\arraystretch{2}
\begin{tabular}{@{} c|c|c|c }
\toprule
Setting & Anytime & Upper bound & Lower bound \\
\midrule
\multirow{2}{*}{$R_n$} & \xmark & \multirow{2}{*}{\makecell{$n^{-2}$ \\\scriptsize (Nesterov \cite{nesterov:1983})}}  & \multirow{4}{*}{\makecell{$n^{-2}$\\ \scriptsize(Nemirovsky and Yudin \cite{nemirovsky:1983})}} \\ \cline{2-2}
                                & \cmark & & \\ \cline{1-3}
\multirow{2}{*}{$G_n$} & \xmark & \makecell{\rule{0pt}{1.3em}$n^{-2}$ \\ \scriptsize (Kim and Fessler \cite{kim:2021}, \\ \scriptsize Kim et al.~\cite{kim:2023})} & \\ \cline{2-3}
                                & \cmark & \makecell{\\[-8pt]$n^{-1}$\\ \scriptsize (Taylor et al. \cite{taylor:2018})} & \\
\bottomrule
\end{tabular}
\label{tab:sota_results_first_order_methods}
\end{table}

\section{Helper Lemmas}\label{sec:helper_lemmas}

The quantity $Q_n(\schedule)$ can be written as the infinity norm $\norm{q_{\schedule}}_{[0,1]}$ of the polynomial $q_{\schedule}(\lambda) = \lambda\prod_{k=1}^n (1-\eta_k\lambda)^2$.
This observation connects quadratic optimization to polynomial approximation theory.
In this section, we collect related results used in our analysis.

\begin{definition}\label{def:chebyshev_polynomial}
The $n$-th Chebyshev polynomial of the first kind is defined as
\[
	T_n(x) = \begin{cases}
		\cos(n\arccos x) &\text{if } \abs{x}\leq 1, \\
		\cosh(n\cosh^{-1}x) &\text{if }x>1, \\
		(-1)^n\cosh(n\cosh^{-1}(-x)) &\text{if }x<-1. \\
	\end{cases}
\]
\end{definition}
One can show that $T_n$ defines an $n$-th degree real polynomial.
The Chebyshev polynomials satisfy a minimum-norm property:
$2^{1-n}T_n$ is the unique $n$-th degree monic polynomial that minimizes $\norm{ \cdot }_{[-1,1]}$ (see, e.g., Theorem~2.1.1 of Borwein and Erd\'{e}lyi~\cite{borwein:1995}).
The next lemma characterizes the minimum infinity norm over all polynomials $p$ satisfying $p(0)=1$.
The proof can be found in, for example, Chapter 2 of d'Aspremont et al.~\cite{daspremont:2021}.

\begin{lemma}\label{lem:minimum_norm_polynomial}
For any $L>\mu>0$ and any $n\in\N$, we have
\[
	\min_{ p\in \mathscr{P}_n: p(0)=1 } \norm{p}_{ [\mu,L] }
    = \frac{2}{\xi^n + \xi^{-n}},
\]
where $\xi = \frac{\sqrt{L}+\sqrt{\mu}}{\sqrt{L} - \sqrt{\mu}}$.
The minimum is achieved by $p(x)=\frac{T_n(\frac{2x-(L+\mu)}{L-\mu})}{T_n(-\frac{L+\mu}{L-\mu})}$.
\end{lemma}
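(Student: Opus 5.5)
The plan is to reduce to the classical extremal property of Chebyshev polynomials through an affine change of variables, and to prove the two inequalities separately. Let $\phi(x)=\frac{2x-(L+\mu)}{L-\mu}$. This map sends $[\mu,L]$ bijectively onto $[-1,1]$, and it sends $0$ to $t_0\coloneqq-\frac{L+\mu}{L-\mu}$, where $\abs{t_0}>1$. Composing with $\phi$ preserves degree, so the problem becomes minimizing $\norm{\tilde p}_{[-1,1]}$ over $\tilde p\in\mathscr{P}_n$ with $\tilde p(t_0)=1$.

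\textbf{Upper bound.} I will show that the stated minimizer $p=T_n\circ\phi/T_n(t_0)$ is admissible and has norm $2/(\xi^n+\xi^{-n})$.

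1. It satisfies $p(0)=1$ by construction.
2. Since $\abs{T_n}\leq 1$ on $[-1,1]$, we get $\norm{p}_{[\mu,L]}=1/\abs{T_n(t_0)}$.
3. To evaluate $\abs{T_n(t_0)}$, I will verify the algebraic identity
\[
\frac{\xi+\xi^{-1}}{2}=\frac{(\sqrt L+\sqrt\mu)^2+(\sqrt L-\sqrt\mu)^2}{2(L-\mu)}=\frac{L+\mu}{L-\mu}=\abs{t_0}.
\]
4. This gives $\cosh^{-1}\abs{t_0}=\log\xi$. By \Cref{def:chebyshev_polynomial} it follows that $\abs{T_n(t_0)}=\cosh(n\log\xi)=\frac{\xi^n+\xi^{-n}}{2}$, which is the claimed value.

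\textbf{Lower bound.} I will use the standard equioscillation argument. Suppose some $q\in\mathscr{P}_n$ with $q(0)=1$ had $\norm{q}_{[\mu,L]}<1/\abs{T_n(t_0)}$.

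1. Consider the points $s_j=\phi^{-1}(\cos(j\pi/n))$ for $j=0,\dots,n$. At these points $p(s_j)=(-1)^j\sigma/\abs{T_n(t_0)}$ for a fixed sign $\sigma$, so $p$ equioscillates at its norm value.
2. The difference $r=p-q$ has degree at most $n$ and satisfies $r(0)=0$.
3. Because $\abs{q(s_j)}<\abs{p(s_j)}$, the sign of $r(s_j)$ equals that of $p(s_j)$, so $r$ strictly alternates in sign across the $n+1$ ordered points $s_0,\dots,s_n$.
4. By the intermediate value theorem, $r$ has at least $n$ distinct zeros in the open interval $(\mu,L)$.
5. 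Since $\mu>0$, the zero at $0$ lies outside this interval and is an additional zero.

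A nonzero polynomial of degree at most $n$ cannot have $n+1$ distinct zeros, so $r\equiv 0$. Then $q=p$, contradicting the strict inequality on the norm.

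The main point requiring care is the strict sign alternation in the lower-bound step. It relies on the extremal points of $T_n$ being exactly $\cos(j\pi/n)$ with values $(-1)^j$, and on the inequality $\abs{q}<\abs{p}$ being strict at those points. Both follow directly from \Cref{def:chebyshev_polynomial}. This argument also yields uniqueness of the minimizer, though uniqueness is not needed for the statement.
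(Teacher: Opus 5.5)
Your proof is correct: the affine change of variables to $[-1,1]$, the evaluation $\abs{T_n(t_0)}=\frac{\xi^n+\xi^{-n}}{2}$ via $\cosh^{-1}\abs{t_0}=\log\xi$, and the zero-counting equioscillation argument for the lower bound are all sound. The paper does not prove this lemma itself but defers to Chapter~2 of d'Aspremont et al.; your proof is the standard Chebyshev extremality argument for this result.
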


The following lemma bounds the norm of the derivative $p'$ by the norm of $p$.
It is a shifted version of the standard Markov brothers' inequality (see, e.g., Theorem 5.1.8 of Borwein and Erd\'{e}lyi~\cite{borwein:1995}).

\begin{lemma}\label{lem:markov_brother}
For $p\in\mathscr{P}_n$, we have
\[
    \norm{ p' }_{[\mu, L]} \leq \frac{2n^2}{b-a} \norm{ p }_{[\mu, L]}.
\]
\end{lemma}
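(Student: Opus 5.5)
The plan is to reduce the statement to the classical Markov brothers' inequality on $[-1,1]$ by an affine change of variables. (In the displayed inequality, the interval endpoints $a,b$ should be read as $\mu,L$, so the factor is $\frac{2n^2}{L-\mu}$; I will prove it in that form, for $L>\mu$.) The classical result, Theorem~5.1.8 of Borwein and Erd\'{e}lyi~\cite{borwein:1995}, states that every real polynomial $q$ of degree at most $n$ satisfies $\norm{q'}_{[-1,1]}\leq n^2\norm{q}_{[-1,1]}$. I will take this as given.

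First, define the affine bijection $\phi:[-1,1]\to[\mu,L]$ by
\[
\phi(t)=\frac{(L-\mu)t+(L+\mu)}{2}.
\]
Given $p\in\mathscr{P}_n$, set $q\coloneqq p\circ\phi$. Since $\phi$ is affine, $q$ is again a real polynomial of degree at most $n$. Because $\phi$ maps $[-1,1]$ onto $[\mu,L]$, we get $\norm{q}_{[-1,1]}=\norm{p}_{[\mu,L]}$.

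Second, the chain rule gives $q'(t)=\frac{L-\mu}{2}\,p'(\phi(t))$. Taking suprema over $t\in[-1,1]$, and again using surjectivity of $\phi$ onto $[\mu,L]$, yields
\[
\norm{q'}_{[-1,1]}=\frac{L-\mu}{2}\,\norm{p'}_{[\mu,L]}.
\]

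Third, apply the classical inequality to $q$ and substitute both identities:
\[
\frac{L-\mu}{2}\norm{p'}_{[\mu,L]}\leq n^2\norm{p}_{[\mu,L]}.
\]
Rearranging gives the claimed bound $\norm{p'}_{[\mu,L]}\leq\frac{2n^2}{L-\mu}\norm{p}_{[\mu,L]}$.

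There is no real obstacle here. The only points needing care are that composing with an affine map does not raise the degree, so the classical theorem applies to $q$, and that sup-norms are preserved exactly because $\phi$ is a bijection between the two intervals.
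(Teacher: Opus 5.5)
Your proposal is correct and matches the paper's proof: the same affine substitution $q(x)=p\bigl(\frac{L-\mu}{2}x+\frac{L+\mu}{2}\bigr)$, the same two norm identities $\norm{q}_{[-1,1]}=\norm{p}_{[\mu,L]}$ and $\norm{q'}_{[-1,1]}=\frac{L-\mu}{2}\norm{p'}_{[\mu,L]}$, and the same appeal to the classical Markov brothers' inequality. Reading $b-a$ as $L-\mu$ is also what the paper's proof actually concludes.
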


\begin{proof}
For any $p\in\mathscr{P}_n$ defined on $[\mu, L]$, let $q(x) = p\left( \frac{L-\mu}{2}x + \frac{L+\mu}{2} \right)$ for $x\in[-1,1]$.
We have $\deg q = \deg p = n$.
By Markov brothers' inequality,
\[
	\norm{ q' }_{[-1,1]} \leq n^2 \norm{ q }_{[-1,1]}.
\]
Since $\norm{ q' }_{[-1,1]} = \frac{L-\mu}{2}\norm{p'}_{[\mu,L]}$ and $\norm{ q }_{[-1,1]} = \norm{ p }_{[\mu,L]}$, we obtain
\[
	\norm{ p' }_{[\mu,L]} \leq \frac{2n^2}{L-\mu} \norm{ p }_{[\mu,L]}.
\]
This completes the proof.
\end{proof}

By combining \Cref{lem:minimum_norm_polynomial} and \Cref{lem:markov_brother}, we prove the following lemma.
Note that the constraint is $p'(0)=1$ instead of $p(0)=1$.

\begin{lemma}\label{lem:minimum_norm_polynomial_derivative}
For any $L>\mu>0$ and any $n\in\N$, we have
\[
	\min_{ p\in \mathscr{P}_n: p'(0)=1 } \norm{p}_{ [\mu,L] }
    \geq \frac{L-\mu}{2n^2}\cdot\xi^{-(n-1)}.
\]
where $\xi = \frac{\sqrt{L}+\sqrt{\mu}}{\sqrt{L} - \sqrt{\mu}}$.
\end{lemma}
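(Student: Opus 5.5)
The plan is to reduce the derivative-constrained problem to the value-constrained one (\Cref{lem:minimum_norm_polynomial}) by passing to $p'$. Fix any $p\in\mathscr{P}_n$ with $p'(0)=1$. Then $q\coloneqq p'\in\mathscr{P}_{n-1}$ satisfies $q(0)=1$. By \Cref{lem:minimum_norm_polynomial}, applied with degree $n-1$ on $[\mu,L]$, we get
\[
\norm{p'}_{[\mu,L]} \geq \frac{2}{\xi^{n-1}+\xi^{-(n-1)}} \geq \xi^{-(n-1)},
\]
where the second inequality uses $\xi>1$, so $\xi^{n-1}+\xi^{-(n-1)}\leq 2\xi^{n-1}$. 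The degenerate case $n=1$ must be checked separately. There $p'\equiv 1$ is constant, so $\norm{p'}=1=\xi^0$ and the bound still holds. If a zero-degree version of \Cref{lem:minimum_norm_polynomial} is not wanted, this case can simply be treated directly.

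Next, I will apply the shifted Markov brothers' inequality (\Cref{lem:markov_brother}) to $p$ on $[\mu,L]$. Its statement contains a typo, $b-a$, which should read $L-\mu$, as its proof shows. This gives
\[
\norm{p'}_{[\mu,L]}\leq \frac{2n^2}{L-\mu}\norm{p}_{[\mu,L]}.
\]
Chaining the two displays yields $\norm{p}_{[\mu,L]}\geq \frac{L-\mu}{2n^2}\xi^{-(n-1)}$ for every admissible $p$. Taking the infimum over $p$ proves the claim. A minimizer exists by compactness, but only the lower bound is needed anyway.

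The one point requiring care is degrees: $\mathscr{P}_n$ should be read as polynomials of degree at most $n$. Markov's inequality holds for degree at most $n$, since $n^2$ is monotone in $n$, and the Chebyshev extremal bound for degree at most $n-1$ is the same value as in \Cref{lem:minimum_norm_polynomial}. Otherwise the argument is a direct two-line composition, and I expect no real obstacle.
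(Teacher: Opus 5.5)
Your proposal is correct and follows essentially the same route as the paper: apply \Cref{lem:minimum_norm_polynomial} to $q=p'$ with $q(0)=1$ to get $\norm{p'}_{[\mu,L]}\geq\xi^{-(n-1)}$, then pass to $\norm{p}_{[\mu,L]}$ via the shifted Markov inequality. Your extra remarks on the $n=1$ case, the $b-a$ typo (which should read $L-\mu$), and reading $\mathscr{P}_n$ as degree at most $n$ are sound refinements that the paper leaves implicit.
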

\begin{proof}
By \Cref{lem:markov_brother}, we have
\[
	\min_{ p\in \mathscr{P}_n: p'(0)=1 } \norm{p}_{ [\mu,L] }
	\geq
	\frac{L-\mu}{2n^2} \cdot \min_{ p\in \mathscr{P}_n: p'(0)=1 } \norm{p'}_{ [\mu,L] }.
\]
By \Cref{lem:minimum_norm_polynomial}, we have
\begin{align*}
	\min_{ p\in \mathscr{P}_n: p'(0)=1 } \norm{p'}_{ [\mu,L] }
	&= \min_{ q\in \mathscr{P}_{n-1}: q(0)=1 } \norm{q}_{ [\mu,L] }  \\
	&= \frac{2}{\xi^{n-1} + \xi^{-(n-1)} } \\
	&\geq \frac{1}{\xi^{n-1}},
\end{align*}
where the inequality follows from $\xi^{-(n-1)} \leq \xi^{n-1}$.
The lemma follows by combining the above estimates.
\end{proof}

Lastly, we introduce the following lemma from measure theory, known as the layer-cake representation \cite{lieb:2001}.

\begin{lemma}\label{lem:layer_cake}
Let $(\R, \mathcal{A}, \mu)$ be a measure space.
For a non-negative measurable function $F:\R\to[0,\infty)$ and a measurable set $A\in\mathcal{A}$, we have
\[
	\int_{A} F(x)\, \du \mu(x) = \int_0^\infty \mu\left( \{ x\in A: F(x) > t \} \right)\,\du t.
\]
\end{lemma}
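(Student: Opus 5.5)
The plan is to write the integrand as an integral of an indicator and then exchange the order of integration by Tonelli's theorem. For every $x\in A$ and every value $F(x)\in[0,\infty)$ we have the elementary identity
\[
	F(x) = \int_0^{F(x)} \du t = \int_0^\infty \mathbbm{1}\{F(x)>t\}\,\du t,
\]
where $\du t$ denotes Lebesgue measure on $[0,\infty)$. Integrating this over $A$ with respect to $\mu$ gives
\[
	\int_A F(x)\,\du\mu(x) = \int_A \int_0^\infty \mathbbm{1}\{F(x)>t\}\,\du t\,\du\mu(x).
\]
If the two integrals may be swapped, the right-hand side becomes $\int_0^\infty \mu(\{x\in A: F(x)>t\})\,\du t$, which is the claim.

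The key steps, in order, are as follows.
\begin{enumerate}
	\item Show that the function $H(x,t)\coloneqq \mathbbm{1}_A(x)\,\mathbbm{1}\{F(x)>t\}$ is nonnegative and measurable with respect to the product $\sigma$-algebra $\mathcal{A}\otimes\mathcal{B}([0,\infty))$. Its support is the set $\{(x,t)\in A\times[0,\infty): F(x)-t>0\}$. The map $(x,t)\mapsto F(x)-t$ is a difference of two product-measurable functions, namely $F\circ\pi_1$ and $\pi_2$, so it is product-measurable, and hence so is the support of $H$.
	\item Apply Tonelli's theorem to $H$ to interchange the $\du\mu$ and $\du t$ integrals. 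Tonelli requires no integrability assumption, since $H\geq 0$.
	\item Observe that for each fixed $t$, the inner integral is $\int H(x,t)\,\du\mu(x) = \mu(\{x\in A: F(x)>t\})$. This yields the stated identity, with both sides possibly equal to $+\infty$.
\end{enumerate}

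The only step needing care is the hypothesis of Tonelli's theorem, which in its standard form requires $\mu$ to be $\sigma$-finite. In every use in this paper, $\mu$ is the counting measure $\mu_n=\sum_{k=1}^n\delta_{\eta_k}$, which is a finite measure, so the hypothesis holds. In that case the identity can even be checked directly, since both sides reduce to the finite sum $\sum_{k:\eta_k\in A}F(\eta_k)$.

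For a general, non-$\sigma$-finite $\mu$, I would avoid Fubini–Tonelli entirely and argue through simple functions instead. For a simple function $F=\sum_i c_i\mathbbm{1}_{E_i}$ with disjoint sets $E_i$, both sides can be computed explicitly and agree. For general nonnegative measurable $F$, take simple functions $F_j\uparrow F$. The left-hand sides converge by the monotone convergence theorem. On the right-hand side, for each $t$ the sets $\{F_j>t\}\cap A$ increase to $\{F>t\}\cap A$, so continuity of measure from below gives pointwise convergence of the integrands in $t$, and a second application of monotone convergence (now in $t$) passes to the limit.
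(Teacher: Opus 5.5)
The paper gives no proof of this lemma and only cites \cite{lieb:2001}, where it is obtained by the same Fubini--Tonelli interchange of $\int_A\int_0^\infty \mathbbm{1}\{F(x)>t\}\,\du t\,\du\mu(x)$ that you use; your proposal is correct and follows this standard route. Your $\sigma$-finiteness caveat is harmless here, since each $\mu_n$ is finite. Your simple-function/monotone-convergence fallback is also valid: $t\mapsto\mu(\{x\in A:F(x)>t\})$ is non-increasing, hence measurable, which that argument needs. It proves the identity for arbitrary measures, so the Tonelli step is in fact optional.
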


\end{document}